\numberwithin{equation}{section}
\newcommand{\comments}[1]{}
\newcommand{\CC}{\mathbb{C}}
\newcommand{\PP}{\mathbb{P}}
\newcommand{\QQ}{\mathbb{Q}}
\newcommand{\bB}{\mathbf{B}}
\newcommand{\bM}{\mathbf{M}}
\newcommand{\cal}{\mathcal}
\def\cA{{\cal A}}
\def\cE{{\cal E}}
\def\cF{{\cal F}}
\def\cO{{\cal O}}
\def\cI{{\cal I}}
\def\lra{\longrightarrow}
\newcommand{\ses}[3]{0\rightarrow{#1}\rightarrow{#2}\rightarrow{#3}\rightarrow 0}
\def\and{\quad{\rm and}\quad}
\def\and{\quad\text{and}\quad}
\def\PP{\mathbb{P}}
\def\CC{\mathbb{C}}
\def\lra{\longrightarrow}
\def\cO{\mathcal{O}}
\def\lr{\rightarrow}
\DeclareMathOperator{\Ext}{Ext} 
\DeclareMathOperator{\Hom}{Hom}
\newtheorem{prop}{Proposition}[section]
\newtheorem{theo}[prop]{Theorem}
\newtheorem{lemm}[prop]{Lemma}
\newtheorem{coro}[prop]{Corollary}
\theoremstyle{definition}
\newtheorem{defi}[prop]{Definition}
\newtheorem{rema}[prop]{Remark}
\title[wall-crossings of stable pairs]{Moduli Spaces of $\alpha$-stable Pairs and Wall-Crossing on $\mathbb{P}^2$}
\author{Jinwon Choi}
\address{Department of Mathematics, Sookmyung Women's University, Seoul 140-742, Korea}
\email{jwchoi@sookmyung.ac.kr}
\author{Kiryong Chung}
\address{School of Mathematics, Korea Institute for Advanced Study, Seoul 130-722, Korea}
\email{krjung@kias.re.kr}
\keywords{Semistable pairs, Wall-crossing formulae, Blow-up/down, and Betti numbers}
\subjclass[2010]{14D20.}
\begin{document}

\begin{abstract}
We study the wall-crossing of the moduli spaces $\mathbf{M}^\alpha (d,1)$ of $\alpha$-stable pairs with linear Hilbert polynomial $dm+1$ on the projective plane $\mathbb{P}^2$ as we alter the parameter $\alpha$. When $d$ is $4$ or $5$, at each wall, the moduli spaces are related by a smooth blow-up morphism followed by a smooth blow-down morphism, where one can describe the blow-up centers geometrically. As a byproduct, we obtain the Poincar\'e polynomials of the moduli spaces $\mathbf{M}(d,1)$ of stable sheaves. We also discuss the wall-crossing when the number of stable components in Jordan-H\"{o}lder filtrations is three.
\end{abstract}
\maketitle
\section{Introduction}

\subsection{Motivation and Results}
In moduli theory, for a given quasi-projective moduli space $\bM_0$, various compactifications stem from the different view points for the moduli points of $\bM_0$.
After we obtain various compactified moduli spaces of $\bM_0$, it is quite natural to ask the geometric relationship among them. Sometimes, this question is answered by birational morphisms between them, which enables us to obtain some geometric information (for example, the cohomology groups) of one space from that of the other \cite{Thad2,CKH}.

In this paper, we study the moduli space of semistable sheaves of dimension one on smooth projective surfaces \cite{simp}, which recently gains interests in both mathematics and physics.
This is an example of compactifications of the relative Jacobian variety, where we regard its general point as a sheaf
$$
F:=\cO_C(\sum_{i=1}^n p_i)
$$
on a smooth curve $C$ with pole along points $p_i$ of general position. In general, the moduli space of semistable sheaves is hard to study due to the lack of geometry of its boundary points. However, if $n$ is equal to the genus of $C$, the sheaf $F$ has a unique section up to scalar. So, we may alternatively consider the general point as a sheaf with a section, which in turn leads to another compactification, so called the moduli space of $\alpha$-semistable pairs (more generally, the coherent systems \cite{lepot3}). When $\alpha$ is large, it can be shown that the moduli spaces of $\alpha$-stable pairs are nothing but the relative Hilbert schemes of points on curves. The main advantage of this viewpoint is that in many cases the relative Hilbert scheme is more controllable than the moduli space of stable sheaves. In this paper, we are interested in comparing various compactifications and getting geometric information of the moduli space of sheaves from the relative Hilbert scheme.

We begin by reviewing the theory of $\alpha$-stable pairs. Let $X$ be a smooth projective variety with fixed ample line bundle $\cO_X(1)$. By definition, a pair $(s, F)$ consists of a sheaf $F$ on $X$
and one-dimensional subspace $s \subset H^0(F)$. Let us fix $\alpha \in \QQ[m]$ with a positive leading coefficient. A pair $(s,F)$ is called \emph{$\alpha$-semistable} if $F$ is pure and for any proper nonzero subsheaves $F'\subset  F$, the inequality
$$
\frac{\chi(F'(m))+\delta\cdot\alpha}{r(F')} \leq \frac{\chi(F(m))+\alpha}{r(F)}
$$
holds for $m\gg 0$. Here $r(F)$ is the leading coefficient of the Hilbert polynomial $\chi(F(m))$ and $\delta=1$ if the section $s$ factors through $F'$ and $\delta=0$ otherwise.
When the strict inequality holds, $(s,F)$ is called $\alpha$-stable.

The moduli space of $\alpha$-semistable pairs (more generally, \emph{coherent systems}) on a smooth projective variety was extensively studied by Le Potier \cite{lepot1,lepot2}. By general results of the geometric invariant theory, Le Potier proved that there exist projective schemes $\bM_X^{\alpha}(P(m))$ which parameterize the
\emph{$S$-equivalence classes} of $\alpha$-semistable pairs with fixed Hilbert polynomial $P(m)$ on $X$. Here we say that two $\alpha$-semistable pairs are $S$-equivalent if two pairs have equivalent Jordan-H\"{o}lder filtrations. M. He \cite{mhe} studied the geometry of moduli space of $\alpha$-stable pairs on the projective plane $X=\PP^2$ in order to compute the Donaldson numbers.


From now on,
we will denote $$\bM^{\alpha}(d,\chi):=\bM_{\PP^2}^{\alpha}(dm+\chi)$$ for $X=\PP^2$ with linear Hilbert polynomial $P(m)=dm+\chi$.

When $\alpha$ is sufficiently large (for example, $\mbox{deg}(\alpha)\geq \dim X$), $\alpha$-stable pairs are precisely stable pairs in the sense of Pandharipande-Thomas \cite{pt}. 
Moreover, when $X$ is $\PP^2$, we have the following.

\begin{prop}[{\cite[\S 4.4]{mhe}, \cite[Proposition B.8]{PT}}]\label{prop0}\hfill\\
If $\alpha$ is sufficiently large, then $\bM^{\alpha:=\infty}(d,\chi)$ is
isomorphic to the relative Hilbert scheme of points on the universal degree $d$ curve. Moreover, it is an \emph{irreducible normal} variety.
\end{prop}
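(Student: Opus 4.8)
The plan is to first identify $\bM^{\alpha:=\infty}(d,\chi)$ with a relative Hilbert scheme, and then to prove irreducibility and normality of the latter. For the identification, I would unwind the stability inequality as $\alpha\to\infty$: since $\alpha$ has positive leading coefficient, the $\alpha$-terms dominate the defining inequality for $\alpha\gg 0$, and a routine check --- this is the content of the cited results of He, Le Potier and Pandharipande--Thomas --- shows that a pair $(s,F)$ of Hilbert polynomial $dm+\chi$ is $\alpha$-stable for $\alpha\gg 0$ exactly when $F$ is pure of dimension one and $\coker s$ is zero-dimensional, i.e.\ is a Pandharipande--Thomas stable pair. Put $\PP^N=|\cO_{\PP^2}(d)|$, $N=\binom{d+2}{2}-1$, and let $\mathcal C\subset\PP^2\times\PP^N$ be the universal degree-$d$ curve. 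Given such a pair, the Fitting support of $F$ is a divisor $C\in\PP^N$ and the section exhibits $\cO_C$ as a subsheaf of $F$; from $0\to\cO_C\to F\to\coker s\to 0$ together with $\chi(\cO_C)=1-\binom{d-1}{2}$ one reads off $\ell:=\mathrm{length}(\coker s)=\chi-1+\binom{d-1}{2}$. Conversely $F$ is reconstructed from $(C,Z)$, with $Z\subset C$ a length-$\ell$ subscheme, as a suitable $\cO_C$-dual of $I_{Z/C}$, the section being the canonical one; this correspondence is a bijection and is functorial in families, giving an isomorphism $\bM^{\alpha:=\infty}(d,\chi)\cong\mathrm{Hilb}^{\ell}(\mathcal C/\PP^N)$, which I would quote from the references rather than reprove.

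For irreducibility and normality, the key point is that a length-$\ell$ subscheme of a plane curve $C$ is in particular a length-$\ell$ subscheme of $\PP^2$, so
$$\mathrm{Hilb}^{\ell}(\mathcal C/\PP^N)=\bigl\{(Z,C)\in\mathrm{Hilb}^{\ell}(\PP^2)\times\PP^N : Z\subset C\bigr\},$$
and the projection $\pi$ to $\mathrm{Hilb}^{\ell}(\PP^2)$ has fiber over $Z$ the linear system $\PP(H^0(I_{Z/\PP^2}(d)))$, of dimension $N-\ell+h^1(I_{Z/\PP^2}(d))$. On the open locus $V\subset\mathrm{Hilb}^{\ell}(\PP^2)$ where $Z$ imposes independent conditions on degree-$d$ curves, $\pi$ is a $\PP^{N-\ell}$-bundle over the smooth irreducible variety $\mathrm{Hilb}^{\ell}(\PP^2)$, so $\pi^{-1}(V)$ is smooth and irreducible of dimension $N+\ell$, and $V$ is dense because general points impose independent conditions (one has $\ell\le N$ in all the relevant cases). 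Irreducibility then reduces to showing that every fiber of $\pi$ lies in $\overline{\pi^{-1}(V)}$ --- equivalently, that the strata of $\mathrm{Hilb}^{\ell}(\PP^2)$ along which $h^1(I_{Z/\PP^2}(d))$ jumps have codimension too large to create new components --- and normality follows from Serre's $R_1+S_2$ criterion, once one knows in addition that this jumping locus has codimension $\ge 2$ and that $\mathrm{Hilb}^{\ell}(\mathcal C/\PP^N)$ is $S_2$; both facts are established in the cited references, so I would carry out the reduction above and invoke them for these estimates.

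The delicate point is exactly this control of the jumping loci: over subschemes $Z$ in special position (many collinear points, fat points, subschemes supported on low-degree curves) the system $\PP(H^0(I_{Z/\PP^2}(d)))$ is strictly larger than expected, and one must verify that the corresponding strata neither produce extra irreducible components nor destroy regularity in codimension one. This is avoided altogether in the range relevant to this paper: for $d\le 5$ and $\chi=1$ one has $\ell\le 6$, and a short case analysis (collinear subschemes, fat points, subschemes lying on a conic or a cubic, invoking e.g.\ Alexander--Hirschowitz for double points) shows that \emph{every} length-$\ell$ subscheme of $\PP^2$ imposes independent conditions on degree-$d$ curves. Hence $V=\mathrm{Hilb}^{\ell}(\PP^2)$, $\pi$ is globally a projective bundle, and $\mathrm{Hilb}^{\ell}(\mathcal C/\PP^N)$ --- and therefore $\bM^{\alpha:=\infty}(d,\chi)$ --- is smooth and connected, hence in particular an irreducible normal variety.
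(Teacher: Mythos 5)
The paper offers no proof of Proposition \ref{prop0} at all --- it is quoted from the cited sources --- so your proposal is necessarily a different, more detailed route, and it is essentially correct. The identification of $\infty$-stable pairs with Pandharipande--Thomas pairs and then with pairs $(C,Z)$ is exactly the cited content, and your count $\ell=\chi-1+\binom{d-1}{2}=\chi-\tfrac{d(3-d)}{2}$ agrees with the paper's $n$. Your argument for irreducibility and normality via the projection $\pi$ to $\mathrm{Hilb}^{\ell}(\PP^2)$ is, in the range the paper actually uses, the same mechanism as the paper's Lemma \ref{lem0}: there the vanishing $H^1(I_Z(d))=0$ for \emph{every} $Z$ of length $\ell\le d+1$ is deduced from the $d$-very ampleness of $\cO_{\PP^2}(d)$ \cite{CG}, which is the clean substitute for your case analysis. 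One small correction: Alexander--Hirschowitz is not the right tool for that step, since it concerns \emph{generic} unions of fat points, whereas you need the statement for arbitrary length-$\ell$ subschemes; use \cite{CG}, or the classical fact that a zero-dimensional scheme failing to impose independent conditions on degree-$d$ curves must contain a collinear subscheme of length $\ge d+2$. Two caveats, neither of which is a gap relative to the paper: the proposition is stated for all $(d,\chi)$ and is invoked outside the smooth range (e.g.\ $\bB(6,8)$ in Remark \ref{rem1}), where your self-contained argument does not apply and you, like the paper, must fall back on \cite[Corollary 5.12]{lepot2}; and your parenthetical ``$\ell\le N$ in all relevant cases'' is genuinely needed, since for small $d$ and large $\chi$ the image of $\pi$ is a proper subvariety of $\mathrm{Hilb}^{\ell}(\PP^2)$ and the fibration picture degenerates.
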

Here, the number of points on the relative Hilbert scheme is given by $$n:=\chi-\frac{d(3-d)}{2}.$$ We will denote by $\bB(d,n)$ the relative Hilbert scheme of $n$ points on the universal degree $d$ curve.

At the other extreme when $\alpha$ is sufficiently small, the moduli space has a natural forgetful morphism into the moduli space of semistable sheaves, so called the \emph{Simpson space} \cite{simp}. We denote by $\bM(d,\chi)$ the moduli space of semistable sheaves on $\PP^2$ with Hilbert polynomial $dm+\chi$.
Sometimes, we identify the space $\bM(d,\chi)$ with the moduli space of pairs with a zero section.
\begin{prop}\label{prop4}
If $\alpha$ is sufficiently small (denoted by $\alpha=0^+$), there is a natural morphism
$$
\xi: \bM^{0^+}(d,\chi) \lra \bM(d, \chi)
$$
which associates to the $0^+$-stable pair $(s,F)$ the sheaf $F$.
\end{prop}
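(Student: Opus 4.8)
The plan is to show that for sufficiently small $\alpha$ the sheaf underlying an $\alpha$-semistable pair is already a semistable sheaf, and then to obtain $\xi$ from the corepresentability of the two moduli problems. First I would isolate the numerical heart of the matter: there is a constant $\alpha_0>0$, depending only on $d$, so that for every $0<\alpha<\alpha_0$ (a small positive constant) and every $\alpha$-semistable pair $(s,F)$ with $\chi(F(m))=dm+\chi$ the sheaf $F$ is semistable. Suppose not, and let $F'\subset F$ be the maximal destabilizing subsheaf in the Harder--Narasimhan sense. Then $\chi(F')\,r(F)-\chi(F)\,r(F')$ is a positive integer, and, $F$ being pure of dimension one with $r(F)=d$, every nonzero subsheaf satisfies $1\le r(F')\le d$; hence
\[
\frac{\chi(F')}{r(F')}-\frac{\chi(F)}{r(F)}=\frac{\chi(F')\,r(F)-\chi(F)\,r(F')}{r(F')\,r(F)}\ \ge\ \frac{1}{d^{2}}.
\]
On the other hand, evaluated for $m\gg 0$ the inequality defining $\alpha$-semistability, applied to $F'\subset F$, reads $\tfrac{\chi(F')+\delta\alpha}{r(F')}\le \tfrac{\chi(F)+\alpha}{r(F)}$. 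If $\delta=1$ the right-hand side is at most $\chi(F)/r(F)$ because $r(F')\le r(F)$, contradicting the displayed bound; if $\delta=0$ the right-hand side equals $\chi(F)/r(F)+\alpha/d$, which is strictly less than $\chi(F)/r(F)+1/d^{2}$ as soon as $\alpha<1/d$, again a contradiction. Thus $\alpha_0=1/d$ works. Shrinking $\alpha_0$ if necessary, one also invokes the standard finiteness of the $\alpha$-walls (see \cite{mhe}), so that $\alpha$-stability is independent of $\alpha$ on $(0,\alpha_0)$; this is the meaning of the symbol $0^{+}$, and $\bM^{0^{+}}(d,\chi)=\bM^{\alpha}(d,\chi)$ for any such $\alpha$.

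Granting this, I would build $\xi$ functorially. Forgetting the section is a natural transformation from the functor of flat families of $0^{+}$-stable pairs with invariants $(d,\chi)$ to the functor of flat families of semistable sheaves with Hilbert polynomial $dm+\chi$: a $T$-flat family of such pairs has an underlying $T$-flat family of sheaves with the same Hilbert polynomial, whose geometric fibres are semistable by the previous step. Post-composing with the structure morphism from the sheaf functor to its coarse moduli space $\bM(d,\chi)$ (Simpson \cite{simp}) yields a natural transformation from the pair functor to the scheme $\bM(d,\chi)$; since $\bM^{0^{+}}(d,\chi)$ corepresents the pair functor (Le Potier \cite{lepot1,lepot2}), this transformation factors uniquely through a morphism of schemes $\xi\colon \bM^{0^{+}}(d,\chi)\to \bM(d,\chi)$, which on closed points sends the class of $(s,F)$ to the $S$-equivalence class of $F$, i.e. of $\mathrm{gr}(F)$, as required.

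The only genuine obstacle is the uniform estimate of the first step. The point needing care is that one does not know a priori whether the section of an $\alpha$-semistable pair lands inside the putative destabilizing subsheaf, so both possibilities for $\delta$ must be ruled out, and it is the case $\delta=0$ that actually forces the bound $\alpha<1/d$; once the estimate is available the rest is formal. An alternative is to argue through the G.I.T. constructions: both $\bM^{0^{+}}(d,\chi)$ and $\bM(d,\chi)$ are good quotients of locally closed subsets of suitable Quot-type parameter schemes, the tautological family of pairs induces a tautological family of sheaves on the pair parameter scheme, and the estimate guarantees that its classifying map takes $\alpha$-semistable points to Simpson-semistable points; equivariance then lets the map descend to the quotients. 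I would present the functorial route, since it sidesteps comparing the two Hilbert--Mumford criteria, which is where the bookkeeping is heaviest.
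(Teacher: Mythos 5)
Your proposal is correct, and it supplies an argument where the paper gives none: Proposition 1.2 is stated without proof, being treated as standard output of the Le Potier--He theory of coherent systems together with Simpson's construction of $\bM(d,\chi)$. Your two-step route is exactly the right way to make that folklore precise. The key point, that an $\alpha$-semistable pair has semistable underlying sheaf once $\alpha$ is small, is established cleanly by the integrality argument: a destabilizing $F'$ forces $\chi(F')r(F)-\chi(F)r(F')\ge 1$, hence a slope gap of at least $1/d^2$, which the pair inequality cannot absorb once $\alpha<1/d$; and the passage from the natural transformation of functors (forget the section, then map to the corepresenting scheme $\bM(d,\chi)$) to the morphism $\xi$ is the standard use of corepresentability, which also makes well-definedness on $S$-equivalence classes automatic. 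One phrasing slip worth fixing: in the $\delta=1$ case you write that the right-hand side $\frac{\chi(F)+\alpha}{r(F)}$ ``is at most $\chi(F)/r(F)$,'' which is false as stated since $\alpha>0$; what you mean (and what the inequality does give, after moving $\alpha/r(F')$ across and using $r(F')\le r(F)$) is that $\frac{\chi(F')}{r(F')}\le \frac{\chi(F)}{r(F)}+\alpha\bigl(\tfrac{1}{r(F)}-\tfrac{1}{r(F')}\bigr)\le \frac{\chi(F)}{r(F)}$, which then contradicts the slope gap. With that rewording the argument is complete, and it has the added value of producing the explicit threshold $\alpha_0=1/d$, consistent with the paper's smallest listed walls (e.g.\ $\alpha=\tfrac{3}{2}$ for $d=5$ and $\alpha=1$ for $(d,\chi)=(4,3)$ all lie above $1/d$).
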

When $\chi=1$, a general stable sheaf has a unique section up to a scalar multiplication. So, the moduli spaces $\bM^{\alpha}(d,1)$ and $\bM(d,1)$ are birational.
Now we state the main problem of this paper.

\bigskip
\noindent\textbf{Problem:} Compare the moduli spaces $\bM^{\alpha}(d,1)$ and $\bM(d,1)$ by using birational morphisms and compute the cohomology group of $\bM(d,1)$.
\bigskip

Note that, when $d\leq 3$, all moduli spaces $\bM^{\alpha}(d,1)$ and $\bM(d,1)$ are isomorphic to each other \cite{lepot1}.
In this paper, we answer this problem for $d=4$ and $5$.
\begin{theo}[Theorem \ref{mainthm1} and Theorem \ref{mainthm2}]
Assume $d=4$ or $5$.
\begin{enumerate}
\item The moduli space $\bM^{0^+}(d,1)$ is obtained from the $\infty$-stable pair space $\bM^{\infty}(d,1)$ by several wall-crossings such that each wall-crossing is a composition of a smooth blow-up morphism followed by blow-down one.
\item The forgetful map $\xi:\bM^{0^+}(d,1) \lr \bM(d,1)$ is a divisorial contraction such that the exceptional divisor can be described by stable pair spaces with various Euler characteristics.
\end{enumerate}
\end{theo}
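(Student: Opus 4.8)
The plan is to treat the variation of the parameter $\alpha$ as a one-parameter variation of GIT stability and to reduce the global assertion to a local analysis at each wall. A proper subsheaf $F'\subset F$ of a pair with Hilbert polynomial $dm+1$ can destabilize at a parameter $\alpha_0$ only when $\frac{\chi(F')+\delta\alpha_0}{\deg F'}=\frac{1+\alpha_0}{d}$ with $\deg F'<d$ and $\delta\in\{0,1\}$; solving for $\alpha_0$ and using that the reduced Hilbert polynomials of subquotients of a semistable sheaf are bounded, one sees that for $d=4,5$ there are only finitely many walls $0<\alpha_1<\cdots<\alpha_N<\infty$. On each open chamber $\bM^{\alpha}(d,1)$ is constant in $\alpha$; at $\alpha=\infty$ it is the relative Hilbert scheme $\bB(d,n)$ by Proposition \ref{prop0}, and at $\alpha=0^+$ it carries the forgetful morphism $\xi$ to $\bM(d,1)$ by Proposition \ref{prop4}. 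So it suffices to understand one wall $\alpha_0$, with adjacent chambers that I write $\bM^{\alpha_0^-}(d,1)$ and $\bM^{\alpha_0^+}(d,1)$.

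For a fixed wall $\alpha_0$ I would first classify the strictly $\alpha_0$-semistable pairs: each has a Jordan--H\"older filtration of length two or three whose graded pieces are $\alpha_0$-stable pairs and stable sheaves of degree $<d$. Since all such building blocks of degree $\le 3$ agree with their Simpson spaces (\cite{lepot1}) and those of degree $4$ are controlled by \cite{mhe} and the known geometry of $\bM(d,1)$, the list of destabilizing types is short and explicit for $d=4,5$. Each type cuts out a closed locus $Z^-\subset \bM^{\alpha_0^-}(d,1)$ of pairs admitting the corresponding subobject and $Z^+\subset\bM^{\alpha_0^+}(d,1)$ of pairs admitting the corresponding quotient; deformation theory of pairs exhibits $Z^\pm$ as projective bundles, with fibers $\PP(\Ext^1)$ of extension spaces, over products of the lower-degree moduli spaces. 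The crucial input is smoothness: for a one-dimensional sheaf $F$ on $\PP^2$ one has $\Ext^2(F,F)\cong\Hom\bl F,F(-3)\br^\vee=0$ when $F$ is stable, and the corresponding hyperext obstruction of a pair $(\cO\xrightarrow{\,s\,}F)$ vanishes for the same reason, so every $\bM^{\alpha}(d,1)$ in a chamber is smooth of dimension $d^2+1$ and the centers $Z^\pm$ are smooth. A local computation of the tangent--obstruction complex at a strictly semistable point then identifies the normal bundles of $Z^\pm$ and shows that the Thaddeus-type master space $\tilde\bM$ over the wall (cf. \cite{Thad2}) is smooth and dominates both sides, realizing $\mathrm{Bl}_{Z^-}\bM^{\alpha_0^-}(d,1)\cong\tilde\bM\cong\mathrm{Bl}_{Z^+}\bM^{\alpha_0^+}(d,1)$, with common exceptional divisor a $\PP^a\times\PP^b$-bundle over the base whose two rulings are the two blow-down morphisms. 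This proves (1), and reading off the types $Z^\pm$ across the walls gives the geometric description of the blow-up centers.

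For (2) I would study $\xi:\bM^{0^+}(d,1)\to\bM(d,1)$ directly. Because $\gcd(d,1)=1$ there are no strictly semistable sheaves, so $\bM(d,1)$ is smooth of dimension $d^2+1$ and the fiber of $\xi$ over $[F]$ is $\PP H^0(F)$; since $\chi(F)=1$ a general sheaf has $h^0=1$, so $\xi$ is birational and fails to be an isomorphism exactly over $D=\{[F]:h^0(F)\ge 2\}$. Using $h^1(F)=h^0(F)-1\ge 1$ there, a Brill--Noether-type count shows $D$ has codimension $\ge 2$ in $\bM(d,1)$, while $\xi^{-1}(D)=\{(s,F):h^0(F)\ge 2\}$ is a $\PP^{\,h^0-1}$-bundle over $D$ and hence a divisor; thus $\xi$ is a divisorial contraction. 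To describe the contracted divisor, for $(s,F)\in\xi^{-1}(D)$ I would pass to the cokernel $F''=F/s(\cO_{\PP^2})$ of the section and stratify by $\deg F''$ and $\chi(F'')$: each stratum fibers over a stable-pair space $\bM^{\alpha'}(d',\chi')$ of smaller degree (together, when relevant, with a Simpson space of the complementary degree) by a projective space of extensions, and enumerating these strata for $d=4,5$ exhibits the divisorial part of $\xi$ in terms of stable-pair moduli with various Euler characteristics; a dimension count confirms each stratum is a divisor.

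The step I expect to be the main obstacle is the local bookkeeping at the walls that upgrades this set-theoretic flip picture to an honest ``smooth blow-up followed by smooth blow-down'': one must verify, uniformly over all destabilizing types, that the normal cone of $Z^\pm$ is a vector bundle of the expected rank and that the exceptional divisor of $\tilde\bM$ is reduced --- equivalently, that the relevant $\Ext^1$-spaces of extensions have constant dimension along the base and that no unexpected obstruction appears. The hypothesis $d\le 5$ is precisely what keeps the list of destabilizing types --- and hence both this verification and the identification of the lower-degree moduli spaces entering the centers --- finite and tractable; already for $d=5$ some of these centers involve nontrivial moduli of degree-$4$ sheaves, where the ensuing Poincar\'e-polynomial computation is also heaviest.
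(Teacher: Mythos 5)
Your overall skeleton for part (1) --- finitely many walls, flipping loci realized as projective bundles of extensions over products of lower-degree moduli spaces, smoothness from $\Ext^2$-vanishing, and identification of normal bundles --- matches the paper's argument closely. The genuine gap is in the step that actually produces the second contraction. You assert that a Thaddeus-type master space $\tilde\bM$ exists, is smooth, and satisfies $\mathrm{Bl}_{Z^-}\bM^{\alpha_0^-}\cong\tilde\bM\cong\mathrm{Bl}_{Z^+}\bM^{\alpha_0^+}$, but for coherent systems on $\PP^2$ no such master space is available off the shelf; constructing one would require redoing Le Potier's GIT with an auxiliary $\CC^*$-action and then carrying out essentially the same local analysis you are trying to avoid. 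The paper instead blows up one side along the flipping locus, performs an \emph{elementary modification} (Definition \ref{elem}) of the pulled-back universal family along the exceptional divisor to interchange sub- and quotient pairs, checks via a Kodaira--Spencer computation that the modified family is stable in the adjacent chamber (so the universal property of the target moduli space yields a morphism), and finally verifies the Fujiki--Nakano criterion \cite{Fujiki} --- namely that the exceptional divisor restricts to $\cO(-1,-1)$ on its $\PP^a\times\PP^b$ fibers and that the target is smooth. Without this (or an honest master-space construction), your isomorphism of blow-ups is an assertion, not a proof, and it is precisely the content of Theorems \ref{mainthm1} and \ref{mainthm2}.

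Two smaller points. First, for part (2) your stratification of the contracted divisor by the cokernel $F/s(\cO_{\PP^2})$ produces sheaves of \emph{smaller degree}, which is not how the divisorial part is described: the paper uses the duality $F\mapsto F^D$ and the isomorphism $\bM(d,\chi)_k\simeq\bM(d,-\chi)_{k-\chi}$ to identify the Brill--Noether strata with spaces of pairs of the \emph{same} degree $d$ and lower Euler characteristic, e.g.\ $\bM(4,1)_2\simeq\bM^{0^+}(4,-1)$ and $\overline{\bM(5,1)_2}=\xi(\bM^{0^+}(5,-1))$; this is what ``various Euler characteristics'' refers to. Second, the codimension and nonemptiness statements for the Brill--Noether loci (in particular $\bM(d,1)_k=\emptyset$ for $k\ge d-1$) do not follow from a naive Brill--Noether count; they rely on the classification results of Dr\'ezet--Maican and Maican, which you would need to cite rather than derive.
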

As corollaries, we obtain the Poincar\'e polynomials of $\bM(d,1)$ by using those of the relative Hilbert schemes of points on the plane curves (\S5).

To prove part (1), we first find the flipping locus at each wall by using the
stability conditions. It turns out that the blow-up centers can be described as a configuration of points on curves. In particular, they are projective bundles over the product of the moduli spaces of $\alpha$-stable pairs of lower degrees.
After blowing up the moduli space along such loci, by performing the \emph{elementary modifications of pairs} (Definition \ref{elem}), we construct a flat family of pairs that are stable on the other side of the wall, which in turn gives a birational morphism. This morphism is shown to be a smooth blow-down morphism by analyzing the exceptional divisor and applying the Fujiki-Nakano criterion \cite{Fujiki}.

For the part (2), when $d=4$, it can easily be checked that the forgetful morphism
$\xi$ is a divisorial contraction by using the classification of
stable sheaves \cite{maican}. Furthermore, we show that $\xi$ is a smooth blow-up morphism along the Brill-Noether locus (Proposition \ref{prop5}). 

If $d=5$, by using the classification of stable sheaves we can check that the Brill-Noether locus consists of two strata, where the smaller dimensional one is the boundary of the bigger one. Moreover, the whole Brill-Noether locus is an irreducible variety which can easily be obtained from the wall-crossing of $\bM^{\alpha}(5,-1)$ (For detail, see \S4).

\subsection{Outline of the Paper} The stream of this paper is as follows. In \S 2, we review well-known facts about the deformation theory of pairs \cite{mhe} and the notion of the elementary modification of pairs. In \S 3, we provide a proof of Theorem \ref{mainthm1} and Theorem \ref{mainthm2}, that is, we compare the moduli space of $\alpha$-stable pairs by wall-crossing when $d=4$ and $5$. In \S4, we study the forgetful morphisms geometrically and the Brill-Noether loci.
As a corollary, in \S 5, we obtain the Poincar\'e polynomials of $\bM(4,1)$ and $\bM(5,1)$. In \S6, we discuss the wall-crossing for $\bM^{\alpha}(4,3)$ when the number of terms in the Jordan-H\"{o}lder filtration is more than two.


\subsection{Further Works}
For the case $d\geq 6$, the moduli spaces $\bM^{\infty}(d,1)$ do not have a bundle structure over the Hilbert scheme of points and thus we can not apply the same method to calculate the Betti numbers of $\bM(d,1)$. However, one can still compute the topological Euler characteristics of $\bM^{\infty}(d,1)$ by means of the torus localization \cite{ptvertex}. Moreover, under the assumption that the Joyce-Song-type wall-crossing formula \cite{js} holds, the first author has computed the Euler characteristics of $\bM(d,1)$ up to degree $10$ and verified that the results agree with the prediction in physics \cite{choithesis}.

One can go further. In \cite{ckk}, new definition of the \emph{refined Pandharipande-Thomas invariant} is proposed via an extension of the Bia{\l}ynicki-Birula decomposition to singular moduli spaces. It is ``refined'' partly in the sense that it is the virtual motive of $\bM^{\infty}_X(d,\chi)$, which specializes to the Pandharipande-Thomas invariant \cite{pt}. A product formula for these refined Pandharipande-Thomas invariants is conjectured in \cite{ckk}, which is consistent with B-model calculation in physics. It is expected that the lower degree correction terms produced from the product formula are in correspondence with the wall-crossing terms in our paper. Details can be found in \cite{ckk}.

\medskip
\textbf{Acknowledgement.}
We would like to thank Sheldon Katz, Young-Hoon Kiem, Wanseok Lee, and Han-Bom Moon for valuable discussion and comments. We also thank the anonymous reviewers for their valuable comments and suggestions to improve the quality of the paper. 

\section{Preliminaries}
In this section, we collect well-known properties of pairs: deformation theory and the elementary modification.
\subsection{Deformation Theory of Pairs}
Let $X$ be a smooth projective variety.
Deformation theory of pairs (more generally, coherent systems) on $X$ was studied in \cite{mhe,lepot2}. We summarize the results for convenience of readers. Let $\bM$ be the moduli space of semistable pairs on $X$.
We note that the set of all coherent systems forms an abelian category. Also, the category of coherent systems has enough injective objects, so it is possible to define the $\Ext^i(\Lambda, \Lambda')$ of coherent systems $\Lambda$, $\Lambda'$. We consider the category of pairs as its subcategory.
\begin{prop}\cite[Corollary 3.10, Theorem 3.12]{mhe}\label{depair}
The first order deformation space of $\bM$ at a stable pair $\Lambda$ on a smooth projective variety $X$ is given by 
$$
\Ext^1(\Lambda, \Lambda).
$$
Moreover, if $\Ext^2(\Lambda,\Lambda)=0$, then $\bM$ is smooth at $\Lambda$.
\end{prop}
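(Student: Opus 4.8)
The plan is to run the standard deformation-theoretic recipe for objects of an abelian category, with the category being the abelian category $\cA$ of pairs on $X$ (a full subcategory of the abelian category of coherent systems, cf.\ Le Potier \cite{lepot1,lepot2}), and then to transfer the resulting formal picture to the moduli space $\bM$ near the point $[\Lambda]$ via its GIT construction. Here $\Ext^\bullet_X(\Lambda,\Lambda)$ denotes the $\Ext$-groups computed in $\cA$; it is convenient to record, although it is not logically needed for this proposition, that these can be computed either as the hypercohomology of a natural two-term complex built from $\End F$ and the section $s$, or via a long exact sequence relating them to the sheaf $\Ext$-groups $\Ext^i_X(F,F)$ and the cohomology $H^i(X,F)$ -- this last form being what one actually uses for the dimension counts in later sections.

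First I would treat the first-order deformations. Let $\tilde\Lambda$ be a flat family of pairs over $\Spec k[\epsilon]$, $\epsilon^2=0$, restricting to $\Lambda$ over the closed point. That the underlying sheaf $\tilde F$ is flat over $k[\epsilon]$ says precisely that multiplication by $\epsilon$ identifies $\tilde F/\epsilon\tilde F$ with $\epsilon\tilde F$, i.e.\ that
$$
0\lra\Lambda\lra\tilde\Lambda\lra\Lambda\lra0
$$
is a short exact sequence in $\cA$; conversely, any self-extension of $\Lambda$ in $\cA$ carries a canonical $k[\epsilon]$-module structure (with $\epsilon$ acting as the composite $\tilde\Lambda\twoheadrightarrow\Lambda\hookrightarrow\tilde\Lambda$) that makes it a flat first-order deformation, and two such deformations are isomorphic (via an isomorphism inducing the identity on the closed fiber $\Lambda$) precisely when the corresponding extensions are equivalent. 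Hence the first-order deformation space of $\Lambda$ is $\Ext^1_X(\Lambda,\Lambda)$; at a stable $\Lambda$, which is the case of interest, this is the Zariski tangent space $T_{[\Lambda]}\bM$.

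Next I would handle obstructions. Given a small extension of local Artinian $k$-algebras $0\to(t)\to A'\to A\to0$ with $t^2=0$ and $(t)\cong k$, together with a flat family $\Lambda_A$ of pairs over $\Spec A$ lifting $\Lambda$, the usual argument -- lift over an affine cover, compare the local lifts on overlaps, and check that the resulting \v{C}ech $2$-cochain is a cocycle -- produces an obstruction class $\mathrm{ob}(\Lambda_A)\in\Ext^2_X(\Lambda,\Lambda)$ which vanishes if and only if a flat lift $\Lambda_{A'}$ over $A'$ exists, in which case the set of such lifts is a torsor under $\Ext^1_X(\Lambda,\Lambda)$; alternatively one invokes the general obstruction theory attached to the cotangent complex of an object of $\cA$. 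Consequently, if $\Ext^2_X(\Lambda,\Lambda)=0$ then every obstruction vanishes, so the local deformation functor $\mathrm{Def}_\Lambda$ is formally smooth, pro-represented by a power series ring in $\dim_k\Ext^1_X(\Lambda,\Lambda)$ variables. Finally, since $\Lambda$ is stable, $\Aut(\Lambda)=\mathbb{G}_m$ acts trivially on $\mathrm{Def}_\Lambda$, so Luna's slice theorem applied to the GIT quotient presenting $\bM$ identifies $\hat{\cO}_{\bM,[\Lambda]}$ with the hull of $\mathrm{Def}_\Lambda$; hence $\bM$ is smooth at $[\Lambda]$, of dimension $\dim_k\Ext^1_X(\Lambda,\Lambda)$.

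I expect the main obstacle to be the obstruction calculus of the second step: one must carry out the gluing argument inside $\cA$, tracking the section together with the sheaf, and make sure the cochain one writes down genuinely represents a class in the hyper-$\Ext^2$ of the pair rather than only in $\Ext^2_X(F,F)$. A secondary subtlety is the passage from the formal deformation functor to the coarse space $\bM$: for a strictly semistable $\Lambda$ the \'etale-local model of $\bM$ is a GIT quotient of $\mathrm{Def}_\Lambda$ by a positive-dimensional reductive group, where smoothness can be lost, so the implication that $\Ext^2_X(\Lambda,\Lambda)=0$ forces $\bM$ to be smooth at $\Lambda$ is really used, and is cleanest to state, at stable points.
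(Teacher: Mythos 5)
Your argument is sound, and it is essentially the standard one: the paper itself offers no proof of this proposition, quoting it directly from He \cite[Corollary 3.10, Theorem 3.12]{mhe}, and your sketch (first-order deformations as self-extensions in the abelian category of coherent systems, obstructions in $\Ext^2$ via the \v{C}ech/small-extension calculus, then transfer to $\bM$ through the GIT presentation) is exactly the route taken in that reference. You correctly identify the only place where genuine work is needed, namely producing the obstruction class in the $\Ext^2$ of the pair rather than merely of the underlying sheaf; this is the content of He's cited results and cannot be waved through, but your outline of it is accurate. Your closing caveat that the passage from $\mathrm{Def}_\Lambda$ to $\bM$ is clean only at stable points, where $\Aut(\Lambda)=\CC^*$ acts trivially, is consistent with how the proposition is actually invoked in the paper (always at $\alpha$-stable pairs, e.g.\ in the proofs of Lemma \ref{lem1} and Theorem \ref{mainthm1}).
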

We will use the following proposition repeatedly in \S3 and \S4.
\begin{prop}\cite[Corollary 1.6]{mhe}\label{defcoh}
Let $\Lambda=(s, F)$ and $\Lambda'=(s', F')$ be pairs on $X$. Then there is a long exact sequence
\begin{align*}
0&\lr \Hom(\Lambda,\Lambda')\lr \Hom (F,F')\lr \Hom(s,H^0(F')/s')\\
&\lr \Ext^1(\Lambda,\Lambda')\lr \Ext^1(F,F')\lr \Hom(s,H^1(F'))\\
&\lr \Ext^2(\Lambda,\Lambda')\lr \Ext^2(F,F')\lr \Hom(s,H^2(F'))\lr \cdots.
\end{align*}
\end{prop}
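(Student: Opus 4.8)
The plan is to deduce the sequence from a single short exact sequence inside the ambient abelian category $\mathcal{CS}$ of coherent systems on $X$: objects are data $(\Gamma,G)$ consisting of a vector space $\Gamma$ and a linear map $\Gamma\to H^0(G)$, morphisms are the evident compatible pairs, pairs form the full subcategory where $\Gamma\to H^0(G)$ is injective, and $\Ext^i(\Lambda,\Lambda')$ is understood in $\mathcal{CS}$ (which has enough injectives), consistently with Proposition \ref{depair}. Write $\mathrm{sh}\colon(\Gamma,G)\mapsto G$ and $\mathrm{vs}\colon(\Gamma,G)\mapsto\Gamma$ for the two forgetful functors; both are exact, since kernels and cokernels in $\mathcal{CS}$ are formed componentwise. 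For a pair $\Lambda=(s,F)$ there is the tautological short exact sequence in $\mathcal{CS}$
$$0\longrightarrow(0,F)\longrightarrow\Lambda\longrightarrow(s,0)\longrightarrow 0,$$
where $(0,F)$ carries the zero structure map and $(s,0)$ has zero sheaf part. Applying $\Hom_{\mathcal{CS}}(-,\Lambda')$ produces a long exact sequence, and it remains to compute the $\Ext_{\mathcal{CS}}(-,\Lambda')$ of the two outer terms.

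For $(0,F)$: the functor $(0,-)\colon\mathrm{Coh}(X)\to\mathcal{CS}$ is left adjoint to the exact functor $\mathrm{sh}$, hence $\mathrm{sh}$ preserves injectives; pushing an injective resolution of $\Lambda'$ through $\mathrm{sh}$ gives an injective resolution of $F'$, so $\Ext^i_{\mathcal{CS}}((0,F),\Lambda')=\Ext^i_X(F,F')$ for all $i$, and likewise $\Ext^i_{\mathcal{CS}}((0,s\otimes\cO_X),\Lambda')=\Ext^i_X(s\otimes\cO_X,F')=\Hom(s,H^i(F'))$. For $(s,0)$: use the short exact sequence
$$0\longrightarrow(0,s\otimes\cO_X)\longrightarrow(s,s\otimes\cO_X)\longrightarrow(s,0)\longrightarrow 0$$
in which $(s,s\otimes\cO_X)$ carries the identity structure map. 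This middle object is projective in $\mathcal{CS}$, since $\Hom_{\mathcal{CS}}((s,s\otimes\cO_X),-)\cong\Hom(s,\mathrm{vs}(-))$ is an exact functor. Feeding this into the long exact sequence of $\Ext_{\mathcal{CS}}(-,\Lambda')$ and using that the structure map $s'\hookrightarrow H^0(F')$ of the pair $\Lambda'$ is injective, one gets $\Hom_{\mathcal{CS}}((s,0),\Lambda')=0$, $\Ext^1_{\mathcal{CS}}((s,0),\Lambda')=\Hom(s,H^0(F')/s')$, and $\Ext^i_{\mathcal{CS}}((s,0),\Lambda')=\Hom(s,H^{i-1}(F'))$ for $i\geq 2$.

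Substituting both computations into the long exact sequence of the first displayed short exact sequence yields exactly the asserted sequence, the map $\Hom(F,F')\to\Hom(s,H^0(F')/s')$ being $v\mapsto H^0(v)|_s\bmod s'$ and the maps $\Ext^i(F,F')\to\Hom(s,H^i(F'))$ being Yoneda composition with the section $s\in H^0(F)$. The one delicate point is the treatment of $(s,0)$: besides checking the projectivity of $(s,s\otimes\cO_X)$, one must verify that the low-degree part of its long exact sequence collapses to precisely $\Hom(s,H^0(F')/s')$ — for a general coherent system one would instead obtain the cokernel of $s'\to H^0(F')$ at that spot, so the pair hypothesis is genuinely used. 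The rest is a diagram chase between the two long exact sequences.
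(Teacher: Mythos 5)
The paper offers no proof of this statement --- it is quoted verbatim from He's \emph{Espaces de modules de syst\`emes coh\'erents} (Corollaire 1.6) --- and your argument is a correct, self-contained reconstruction along the same lines as the cited source: d\'evissage of $\Lambda$ by $0\to(0,F)\to\Lambda\to(s,0)\to 0$ in the abelian category of coherent systems, plus the identifications $\Ext^i((0,F),\Lambda')\simeq\Ext^i(F,F')$ and $\Ext^i((s,0),\Lambda')\simeq\Hom(s,H^{i-1}(F'))$ (with the degree-one term corrected to $\Hom(s,H^0(F')/s')$ using injectivity of $s'\hookrightarrow H^0(F')$). Two small points to tidy: ``enough injectives'' requires allowing quasi-coherent sheaf parts and arbitrary vector spaces (injective $\cO_X$-modules are not coherent), and the reason $\mathrm{sh}$ preserves injectives is that its \emph{left adjoint} $(0,-)$ is exact, not that $\mathrm{sh}$ itself is; both facts are immediate from the componentwise description of kernels and cokernels, so neither affects the proof.
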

\begin{lemm}\label{lem0}
If $\chi<\frac{4+5d-d^2}{2}$, the moduli spaces $\bM^{\infty}(d,\chi)$ are projective bundles over Hilbert scheme of points on $\PP^2$. Specially, they are smooth.
\end{lemm}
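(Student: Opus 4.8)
The plan is to deduce the lemma from Proposition \ref{prop0}, which identifies $\bM^{\infty}(d,\chi)$ with $\bB(d,n)$, the relative Hilbert scheme of $n=\chi-\frac{d(3-d)}{2}$ points on the universal degree $d$ plane curve (the universal curve over $|\cO_{\PP^2}(d)|$). Since $n=\chi-\frac{d(3-d)}{2}$, subtracting $\frac{d(3-d)}{2}$ from the hypothesis $\chi<\frac{4+5d-d^2}{2}$ rewrites it as $n<d+2$, i.e.\ $n\le d+1$. Now view a point of $\bB(d,n)$ as a pair $Z\subset C$ with $Z\in\mathrm{Hilb}^n(\PP^2)$ and $C$ a degree $d$ curve containing $Z$ scheme-theoretically; forgetting $C$ gives a morphism $\pi\colon\bB(d,n)\to\mathrm{Hilb}^n(\PP^2)$ whose fibre over $Z$ is the linear system $\PP(H^0(\PP^2,\cI_Z(d)))$ of degree $d$ curves through $Z$. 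Since $\mathrm{Hilb}^n(\PP^2)$ is smooth and irreducible (Fogarty), it suffices to prove that for $n\le d+1$ the map $\pi$ is a projective bundle.

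The technical core is the vanishing claim: for every zero-dimensional $Z\subset\PP^2$ of length $n\le d+1$ one has $H^1(\PP^2,\cI_Z(d))=0$; equivalently $Z$ imposes independent conditions on degree $d$ curves, so $h^0(\cI_Z(d))=\binom{d+2}{2}-n$ is independent of $Z$. I would prove this by induction on $n$, the base case $n=0$ being $H^1(\cO_{\PP^2}(d))=0$. For the inductive step, pick $p\in\mathrm{Supp}(Z)$ and any line $L$ through $p$: then $Z\cap L$ is a length $m$ subscheme of $L\cong\PP^1$ with $1\le m\le n$, the residual subscheme $Z'=\mathrm{Res}_L Z$ has length $n-m\le d$, and the residuation sequence
\[
\ses{\cI_{Z'}(d-1)}{\cI_Z(d)}{\cI_{Z\cap L,L}(d)}
\]
has right-hand term $\cO_{\PP^1}(d-m)$. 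Since $m\le d+1$ we get $H^1(\cO_{\PP^1}(d-m))=0$, and since $n-m\le(d-1)+1$ the inductive hypothesis gives $H^1(\cI_{Z'}(d-1))=0$; the long exact cohomology sequence then forces $H^1(\cI_Z(d))=0$.

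Granting this, let $\cZ\subset\mathrm{Hilb}^n(\PP^2)\times\PP^2$ be the universal subscheme and $p,q$ the two projections. By cohomology and base change over the integral scheme $\mathrm{Hilb}^n(\PP^2)$, the vanishing $H^1(\cI_Z(d))=0$ and the constancy of $h^0$ imply that $\cE:=p_*(\cI_{\cZ}\otimes q^*\cO_{\PP^2}(d))$ is locally free of rank $\binom{d+2}{2}-n$, its formation commutes with base change, and $R^1p_*=0$. Unwinding the modular description of $\bB(d,n)$ — it is the incidence scheme $\{(Z,C):\cI_C\subset\cI_Z\}\subset\mathrm{Hilb}^n(\PP^2)\times|\cO_{\PP^2}(d)|$; on it the universal curve furnishes a line subbundle of $\pi^*\cE$, while conversely the tautological subbundle of $\PP(\cE)$ cuts out a relative degree $d$ divisor containing the pullback of $\cZ$ — one identifies $\bB(d,n)$ with the projectivization $\PP(\cE)$ over $\mathrm{Hilb}^n(\PP^2)$. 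Therefore $\bM^{\infty}(d,\chi)\cong\PP(\cE)$ is a projective bundle over the smooth variety $\mathrm{Hilb}^n(\PP^2)$, hence smooth.

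The step I expect to be the main obstacle is the \emph{uniform} vanishing $H^1(\cI_Z(d))=0$ across all length-$n$ subschemes, the degenerate ones included (collinear points, fat points, and their mixtures), since it is precisely this uniformity that makes $\cE$ locally free. The residuation induction above is arranged to avoid any genericity assumption on $Z$; its only numerical inputs are $1\le m=\mathrm{length}(Z\cap L)\le n\le d+1$ and $n-m\le d$, both immediate from $n\le d+1$, and the bound is sharp (it already fails for $d+2$ collinear points).
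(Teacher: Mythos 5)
Your proof is correct and follows the same overall strategy as the paper: rewrite the hypothesis as $n\le d+1$, identify $\bM^{\infty}(d,\chi)$ with $\bB(d,n)$ via Proposition \ref{prop0}, and exhibit the forgetful map to $\mathrm{Hilb}^n(\PP^2)$ as a projective bundle once the uniform vanishing $H^1(\cI_Z(d))=0$ is established. The one genuine difference is how that vanishing is obtained: the paper quotes the $d$-very-ampleness of $\cO_{\PP^2}(d)$ from Catanese--G\"ottsche \cite{CG}, whereas you prove it from scratch by residuation against a line through a point of $Z$ and induction on the length. Your induction is set up correctly --- the inductive hypothesis must be quantified over all degrees, since the residual scheme lives in degree $d-1$, and your numerics $1\le m\le d+1$ and $n-m\le d$ handle exactly this, with the degenerate (collinear, non-reduced) subschemes requiring no separate treatment. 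What the citation buys the paper is brevity; what your argument buys is self-containedness and a transparent explanation of the sharpness of the bound (failure at $d+2$ collinear points), which matches Remark \ref{rem1}(1) on the singularity of $\bB(6,8)$. Your cohomology-and-base-change step identifying $\bB(d,n)$ with $\PP(\cE)$ is also slightly more explicit than the paper's, which simply asserts the bundle structure once local freeness of the pushforward is known; aside from the immaterial dualization convention in forming the projectivization, the two conclusions agree.
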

\begin{proof}
Let $n=\chi-\frac{d(3-d)}{2}$. Then by Proposition \ref{prop0}, $\bM^{\infty}(d,\chi)$ is isomorphic to $\bB(d,n)$. A closed point $(C,Z)$ in $\bB(d,n)$ can be considered as a choice of a section of the ideal sheaf $I_Z(d)$ \cite[\S 4.4]{mhe}.

We have the canonical projection $q\colon \bB(d,n)\to Hilb^{n}(\PP^2)$. Let $\cI$ be a universal ideal sheaf on $Hilb^n(\PP^2)\times \PP^2$ and $p$ be the projection to the first factor. Then, $\bB(d,n)$ is the projective bundle $\PP({p}_* \cI(d)^*)$, provided that ${p}_* \cI(d)$ is locally free. If $n\leq d+1$, we have $H^1(I_Z(d))=0$ for any length $n$ subscheme $Z$ of $\PP^2$ because the line bundle $\cO_{\PP^2}(d)$ is $d$-very ample. Hence, by semicontinuity theorem, ${p}_* \cI(d)$ is locally free as required.
\end{proof}

\begin{rema}\label{rem1}
\begin{enumerate}
\item The bound in Lemma \ref{lem0} is sharp. For example, it can be easily seen that $\bB(6,8)\simeq \bM^\infty(6,-1)$ is not smooth. In fact, for a stable pair $\Lambda$ with Hilbert polynomial $6m-1$, the obstruction space $\Ext^2(\Lambda,\Lambda)$ may not vanish.
\item Under the assumption in Lemma \ref{lem0}, let $\Lambda\in \bM^{\infty}(d,\chi)$ be a stable pair. One can easily check that
\begin{equation}\label{eq0-1}
\Ext^2(\Lambda,\Lambda)=0
\end{equation}
by Proposition \ref{defcoh} and the constancy of the Euler form.
\end{enumerate}
\end{rema}


\subsection{Elementary Modification of Pairs}
We introduce the notion of the modification of pairs on a smooth projective variety $X$. This is a main tool to relate various moduli spaces of semistable pairs by birational morphisms.
\begin{defi}\label{elem}
Let $\cF$ be a flat family of pairs on $X\times S$ parameterized by a smooth variety $S$. Let $\Delta$ be a smooth divisor of $S$ such that the restricted pair $\cF|_{X\times \Delta}$ over $\Delta$ has a flat family $\cA$ of destabilizing quotient pairs. Then we call
$$
elm_{\Delta} (\cF, \cA):=\mbox{ker}(\cF\rightarrow \cF|_{X\times \Delta}\twoheadrightarrow \cA)
$$
the \emph{elementary modification} of pair $\cF$ along $\Delta$.
\end{defi}
In general, elementary modification of pairs interchanges the subpair with the quotient pair. For example, see the proof of \cite[Lemma 4.24]{mhe}.
\section{Wall-Crossing among Moduli Spaces of $\alpha$-stable Pairs}
In this section, we will compare $\infty$-stable pair space $\bM^{\infty}(d,1)$ with the $0^+$-stable pair space $\bM^{0^+}(d,1)$ for $d=4$ and $5$ by using wall-crossing.
By the variation of geometric invariant theoretic quotients \cite{Thad,DH} and the construction of semistable pair space \cite{lepot3}, it seems to be clear there are flipping spaces
among $\bM^{\infty}(d,1)$ and $\bM^{0^+}(d,1)$ in a broad sense. In the following two subsections, we will show that the flipping spaces are related by smooth blow-up and followed by smooth blow-down morphisms (Theorem \ref{mainthm1} and Theorem \ref{mainthm2}). By using the same technique, we also relate the $\alpha$-stable pair spaces with Hilbert polynomial $5m-1$ by smooth blow-up/down morphisms. In Theorem \ref{prop3}, we will present the results without proof. This results will be essential to compute the Betti numbers of the moduli space $\bM(5,1)$ (Corollary \ref{coro2}).
\subsection{Wall-Crossing for $d=4$}
The aim of this subsection is to provide a proof of Theorem \ref{mainthm1} below. We prove that the moduli spaces of $\alpha$-stable pairs with Hilbert polynomial $4m+1$ are related by single blow-up/down morphisms. The main tool of the proof is the elementary modification of pairs and the Fujiki-Nakano criterion \cite{Fujiki}.
We start with a geometric description of walls.
\begin{lemm}\label{lem2}
We have a unique wall at $\alpha=3$ where the strictly semistable points are of type
\begin{equation}\label{eq1-8}
(1,(3,0))\oplus (0,(1,1)),
\end{equation}
where $(1, (d,\chi))$ (resp. $(0, (d,\chi))$) denote the pair $(s,F)$ with a nonzero (resp. zero) section $s$ and Hilbert polynomial $\chi(F(m))=dm+\chi$.
\end{lemm}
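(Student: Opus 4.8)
The plan is to analyze the $\alpha$-stability condition for pairs with Hilbert polynomial $4m+1$ and determine exactly when a destabilizing subpair or quotient pair can occur. First I would recall that a wall in the parameter space of $\alpha$ occurs precisely when there is a pair $(s,F)$ that is strictly $\alpha$-semistable, i.e. when the defining inequality becomes an equality for some proper subsheaf $F' \subset F$. Writing $d' = r(F')$ and $\chi' = \chi(F'(m))$ evaluated appropriately, the equality
\[
\frac{\chi' + \delta\alpha}{d'} = \frac{1 + \alpha}{4}
\]
must hold, where $\delta \in \{0,1\}$ records whether the section $s$ factors through $F'$. Solving for $\alpha$, a wall can only appear when the coefficient of $\alpha$ on the two sides differ, i.e. when $\delta/d' \neq 1/4$. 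Since $F$ is pure of dimension one and degree $4$, the possible values of $d'$ for a proper subsheaf are $1, 2, 3$. I would go through these cases systematically: for each $d'$ and each $\delta \in \{0,1\}$, the equality pins down $\alpha = (4\delta - d')^{-1}\,(d' - 4\chi')$, and one asks for which integer values of $\chi'$ this gives a positive $\alpha$ in the relevant range.

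The key constraint narrowing things down is semistability of $F$ itself (equivalently of the pair for small $\alpha$, since $\bM(4,1)$ consists of semistable sheaves). The slope of $F$ is $1/4$, so any subsheaf $F'$ of rank $d'$ must have $\chi(F'(m))/d' \le$ (something controlled by $1/4$) in the appropriate normalized sense; this forces $\chi' \le d'/4$ roughly, and symmetrically the quotient must not overdestabilize. Combining this with the requirement $\alpha > 0$ leaves, I expect, only the possibility $d' = 3$, $\delta = 1$ (the section lies in the degree-$3$ part) with the complementary factor being a degree-$1$ sheaf carrying no section — that is, a plane line bundle of the form $\cO_L$, which has Hilbert polynomial $m+1$. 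Then the equality reads $(\chi' + \alpha)/3 = (1+\alpha)/4$, and since the quotient $(0,(1,1))$ contributes $\chi - \chi' = 1$, i.e. $\chi' = 0$, one gets $(0 + \alpha)/3 = (1+\alpha)/4$, hence $\alpha = 3$. One also must check the subpair direction (a subpair of type $(0,(1,1))$ or $(1,(3,0))$ sitting inside) gives the same wall $\alpha=3$ and no other, which follows from the same arithmetic applied to the sub rather than the quotient, using that an $S$-equivalence class is recorded by the associated graded.

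The main obstacle I anticipate is the bookkeeping needed to rule out the other numerical candidates — in particular showing that subsheaves of rank $1$ or $2$, or a rank-$3$ subsheaf not containing the section, cannot produce a wall with $\alpha > 0$. For rank $1$: a rank-one subsheaf of a pure degree-$4$ sheaf on $\PP^2$ is a twisted ideal sheaf on a line, so $\chi' = \chi(\cO_L(k)) = k+1$ for some $k$, and semistability of $F$ bounds $k$; one checks the resulting $\alpha$ is either non-positive or does not lie strictly between the $0^+$ and $\infty$ chambers in a way that creates a genuine wall. For rank $2$: the potential destabilizer is a conic-supported sheaf, and again the numerical inequalities plus purity (so that $F/F'$ is again pure of dimension one) eliminate it. Once all competing cases are excluded, the unique surviving type is \eqref{eq1-8}, the wall is located at $\alpha = 3$, and the lemma follows. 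A clean way to organize this is to tabulate, for each $(d', \delta)$, the admissible $\chi'$ and the associated $\alpha$, and observe that exactly one entry survives the constraint $\alpha \in (0,\infty)$ together with semistability on both sides.
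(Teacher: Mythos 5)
Your overall strategy---solve the numerical equality $(\chi'+\delta\alpha)/d'=(1+\alpha)/4$ for each $(d',\delta,\chi')$ and discard the candidates that cannot be realized---is precisely the ``elementary calculation'' the paper invokes (its own proof is a one-line reference to He's characterization of walls), and your surviving case and the value $\alpha=3$ are correct. However, the criterion you use to discard the competing candidates is not valid. You take as ``the key constraint'' the semistability of the underlying sheaf $F$, asserting it is equivalent to semistability of the pair for small $\alpha$, and deduce $\chi'\le d'/4$ for subsheaves. Neither claim holds at a wall: a strictly $3$-semistable pair is $S$-equivalent to $(1,\cO_C)\oplus(0,\cO_L)$, and on the $\alpha>3$ side the relevant pairs contain $\cO_L$ as a subsheaf, with reduced Hilbert polynomial $m+1>m+\tfrac14$; so $F$ is \emph{not} a semistable sheaf, and these pairs are not $0^+$-semistable either. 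Applied literally, your constraint would exclude the genuine wall. Moreover it does not dispose of the real competitors: for $\delta=1$, $d'=3$, $\chi'=b\le -1$ your formula gives $\alpha=3-4b=7,11,\dots$, which are positive, so the filter ``$\alpha>0$'' alone does not eliminate them.

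The correct exclusion mechanism is the existence of both Jordan--H\"older factors as $\alpha$-semistable coherent systems at the putative wall, and the factor that fails is always the section-carrying one $(1,(d',\chi'))$ with $\chi'$ too small. Concretely, the image of the section of such a pair $(1,G)$ is $\cO_{C'}$ for a pure curve $C'$ of degree $c\le d'$, with $\chi(\cO_{C'})=-c(c-3)/2$, and the subpair $(1,\cO_{C'})$ must satisfy $\bigl(\chi(\cO_{C'})+\alpha\bigr)/c\le(\chi'+\alpha)/d'$. For $d'=1$, $\chi'\le 0$ the sheaf is $\cO_L(\chi'-1)$ and has no section at all; for $d'=2$, $\chi'\le 0$ and for $d'=3$, $\chi'\le -1$ the displayed inequality forces $\alpha\le -2$ in every subcase ($c=1$ gives $\alpha\le\chi'-2$, resp.\ $\alpha\le(\chi'-3)/2$; $c=2,3$ are worse or impossible by Hilbert polynomial comparison). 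This leaves only $(1,(3,0))\oplus(0,(1,1))$ at $\alpha=3$, where both factors $(1,\cO_C)$ and $(0,\cO_L)$ do exist and are $\alpha$-semistable of the required slope. With this replacement for your semistability-of-$F$ step, the tabulation closes up and the lemma follows.
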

\begin{proof}
By \cite[Theorem 4.2]{mhe}, the wall occurs at the values of $\alpha$ for which there exist strictly $\alpha$-semistable pairs. This lemma is a consequence of an elementary calculation.
\end{proof}
Let $\Omega$ be the flipping locus in $\bM^{\infty}(4,1)$, that is, the inverse image of the locus of strictly semistable pairs along the natural map $ \bM^{\infty}(4,1) \lr \bM^{3}(4,1)$. The locus $\Omega$ can be described as a projective bundle as follows. Let $\bM(1,1)$ (resp. $\bM^{0^+}(3,0)$) be the moduli space of stable pairs having  a unique (resp. non) zero section with Hilbert polynomial $m+1$ (resp. $3m$). Then they have a universal family of pairs $\cF'$ (resp. $\cF''$) on $\bM(1,1)\times \PP^2$ (resp. $\bM^{0^+}(3,0)\times \PP^2$) \cite[Theorem 4.3]{mhe}.
In fact, $\bM(1,1)\cong Gr(2,3)$ and $\bM^{0^+}(3,0)\cong \PP^9$. The latter is because if $F\in \bM(3,0)$ has a nonzero section, this nonzero section defines a nonzero morphism $\cO_C\to F$ for some cubic curve $C$. Then by stability one can check this morphism is an isomorphism. See the proof of \cite[Theorem 4.4]{lepot1}.

Then one can easily construct the universal families $\cF'$ and $\cF''$.
Let
\begin{align*}
&q_1:\bM(1,1)\times\bM^{0^+}(3,0)\times \PP^2\lr \bM(1,1)\times \PP^2,\\
&q_2:\bM(1,1)\times \bM^{0^+}(3,0)\times \PP^2 \lr \bM^{0^+}(3,0)\times \PP^2, \text{and}\\
&p:\bM(1,1)\times\bM^{0^+}(3,0)\times \PP^2\lr \bM(1,1)\times\bM^{0^+}(3,0)
\end{align*}
be the projection maps. Then one can easily check that the relative Ext sheaf $\mathcal{E}xt_p^1(q_2^*\cF'',q_1^*\cF')$ on $\bM(1,1)\times \bM^{0^+}(3,0)$ is a locally free sheaf of rank $4$.
Let
$$ P:=\PP(\mathcal{E}xt_p^1(q_2^*\cF'',q_1^*\cF'))$$
be the projective bundle on $\bM(1,1)\times \bM^{0^+}(3,0)$. Then there exists a universal sheaf $\cE$ on $P\times \PP^2$ parameterizing the non-split extension sheaves in $\Ext_{\PP^2}^1(\cF''|_s, \cF'|_t)$ for $s\in \bM^{0^+}(3,0)$ and $t\in \bM(1,1)$ (\cite{lang, tomm}). Thus the map
$$
P \hookrightarrow \bM^{\infty}(4,1)
$$
given by the universal sheaf $\cE$ over $P$ is a closed embedding, whose image is precisely $\Omega$.
\begin{rema}\label{rem2}
Under the identification of the moduli space $\bM^{\infty}(4,1)$ with the relative Hilbert scheme $\bB(4,3)$ of three points on quartic curves (Proposition \ref{prop0}),
each closed point of the fiber $\PP^3$ corresponds to the length three subscheme $Z$ of a pair $(Z, L\cdot C)$ such that $Z$ lies on a line $L$ and $C$ is a cubic curve. Let $(1,F)\in P$ be a non split extension class $\ses{(0,\cO_L)}{(1,F)}{(1,\cO_C)}$ where $(0,\cO_L) \in  \bM(1,1)$ (resp. $(1,\cO_C)\in \bM^{0^+}(3,0)$) for fixed $L$ and $C$. Since $(1,F)$ is $\infty$-stable pair, there is a short exact sequence $\ses{(1,\cO_{L\cdot C})}{(1,F)}{(0,Q)}$ for some torsion sheaf $Q$ of the length three on the quartic curve $L\cdot C$. Combining these two short exact sequences and some diagram chasing,
it can be seen that the torsion sheaf $Q$ is supported on $L$ and thus $Q\cong \cO_Z$ for some $Z$ of length three subscheme of $L$.
\end{rema}
Now we will state one of the main theorems.
\begin{theo}\label{mainthm1}
Let $\bM^{\alpha}(4,1)$ be the moduli space of
$\alpha$-semistable pairs on $\PP^2$ with Hilbert
polynomial $4m+1$. Then there is
a flip diagram at $\alpha=3$
$$
\xymatrix{& \widetilde{\bM^{\infty} (4,1)}\ar[rd]^p\ar[ld]_q& \\
\bM^{\infty} (4,1) \ar[dr]& & \bM^{0^+} (4,1) \ar[dl] \\
&\bM^{3} (4,1)&}
$$
such that $\widetilde{\bM^{\infty} (4,1)}$ is the smooth blow-up of $\bM^{\infty} (4,1)$ along $\Omega$ with the exceptional divisor $\widetilde{\Omega}$ and the morphism $p:\widetilde{\bM^{\infty} (4,1)} \lr \bM^{0^+} (4,1)$ is a smooth blow-down one contracting $\widetilde{\Omega}$ along the other direction.
\end{theo}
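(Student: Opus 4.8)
The plan is to establish the flip diagram in three stages: first the blow-up side, then the construction of a family over the blow-up that is $0^+$-stable, and finally the identification of the resulting contraction as a \emph{smooth} blow-down via the Fujiki-Nakano criterion.

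\textbf{Step 1: The blow-up side.} By Lemma \ref{lem0} (applied with $d=4$, $\chi=1$, since $1<\tfrac{4+20-16}{2}=4$), the space $\bM^\infty(4,1)\cong\bB(4,3)$ is smooth, and by the discussion preceding the theorem the flipping locus $\Omega\subset\bM^\infty(4,1)$ is a smooth closed subvariety, being the $\PP^3$-bundle $P=\PP(\Ext^1(\cF''\otimes\cO(1),\cF'))$ over the smooth product $\bM(1,1)\times\bM^{0^+}(3,0)$. Hence $\widetilde{\bM^\infty(4,1)}:=\mathrm{Bl}_\Omega\bM^\infty(4,1)$ is smooth with smooth irreducible exceptional divisor $\widetilde\Omega$, a $\PP^{c-1}$-bundle over $\Omega$ where $c=\mathrm{codim}_{\bM^\infty(4,1)}\Omega$ (to be computed from $\dim\bM^\infty(4,1)=\dim\bB(4,3)$ against $\dim\Omega=\dim\bM(1,1)+\dim\bM^{0^+}(3,0)+3$). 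The morphism $q$ is the blow-up morphism, and $q$ restricted to the complement of $\widetilde\Omega$ is an isomorphism onto $\bM^\infty(4,1)\setminus\Omega$, which the map to $\bM^3(4,1)$ identifies with the $\alpha$-stable locus away from the wall.

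\textbf{Step 2: The family on the blow-up and the morphism $p$.} Over $\widetilde{\bM^\infty(4,1)}$ I would take the pulled-back universal family of pairs and perform the elementary modification of Definition \ref{elem} along the exceptional divisor $\widetilde\Omega$: on $\widetilde\Omega$ the restricted family carries the tautological flat family of destabilizing quotient pairs coming from the extension $\ses{(0,\cO_L)}{(1,F)}{(1,\cO_C)}$ of Remark \ref{rem2} (more precisely, the sub/quotient structure dictated by Lemma \ref{lem2}, namely the JH factors $(0,(1,1))$ and $(1,(3,0))$). Setting $\cA$ to be this family and forming $elm_{\widetilde\Omega}(\cF,\cA)$ produces a flat family over $\widetilde{\bM^\infty(4,1)}$ whose members are pairs with Hilbert polynomial $4m+1$ that are $0^+$-stable: away from $\widetilde\Omega$ nothing changes, while along $\widetilde\Omega$ the modification interchanges the subpair and quotient pair (the general principle recalled after Definition \ref{elem}), converting an $\infty$-unstable-on-the-other-side pair into a $0^+$-stable one — this must be checked directly from the $\alpha$-stability inequality with $\alpha=0^+$ for each point of the fiber. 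The universal property of $\bM^{0^+}(4,1)$ then yields the classifying morphism $p:\widetilde{\bM^\infty(4,1)}\to\bM^{0^+}(4,1)$, which agrees with $q$ followed by the natural map to $\bM^3(4,1)$ on the locus away from the wall, and thus $p$ is birational and contracts $\widetilde\Omega$.

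\textbf{Step 3: $p$ is a smooth blow-down.} Here I would identify the image $\Omega':=p(\widetilde\Omega)\subset\bM^{0^+}(4,1)$ with the analogous flipping locus on the $0^+$ side — a projective bundle over $\bM(1,1)\times\bM^{0^+}(3,0)$ but with the roles of sub and quotient reversed, i.e.\ $\PP(\Ext^1((1,\cO_C),(0,\cO_L)\otimes\cO(1)))$ or the appropriate dual — whose fiber dimension is $c'-1$ where $c'=\mathrm{codim}\,\Omega'$, so that $(c-1)+(c'-1)=c+c'-2$ matches the exceptional-divisor dimension count expected of a flip. The key computation is that $p|_{\widetilde\Omega}:\widetilde\Omega\to\Omega'$ is a smooth $\PP^{c'-1}$-fibration and that the normal bundle of $\widetilde\Omega$ in $\widetilde{\bM^\infty(4,1)}$ restricts to $\cO(-1)$ on each such fiber; the Fujiki-Nakano criterion \cite{Fujiki} then gives that $p$ is the blow-down of a smooth blow-up of $\bM^{0^+}(4,1)$ along $\Omega'$. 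To apply this one also needs $\bM^{0^+}(4,1)$ smooth along $\Omega'$, which follows from the vanishing $\Ext^2(\Lambda,\Lambda)=0$ via Proposition \ref{depair} and Proposition \ref{defcoh} for the relevant pairs (an Euler-form computation as in Remark \ref{rem1}(2)).

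\textbf{Main obstacle.} The hard part will be Step 2 together with the fiberwise normal-bundle computation in Step 3: one must show that the elementary modification along $\widetilde\Omega$ does produce a \emph{flat} family all of whose members are genuinely $0^+$-\emph{stable} (not merely semistable, and with no new strictly semistable points appearing), and then verify the precise $\cO(-1)$-structure of the conormal bundle along the new ruling — this is where the detailed geometry of the $\PP^3$-fiber from Remark \ref{rem2} and a careful diagram chase in the derived category of pairs are essential, and it is the crux on which the Fujiki-Nakano application rests.
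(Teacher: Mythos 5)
Your proposal is correct and follows essentially the same route as the paper: blow up along the smooth $\PP^3$-bundle $\Omega$, perform the elementary modification of the pulled-back universal family along $\widetilde\Omega$ to interchange the sub/quotient pairs, invoke the universal property of $\bM^{0^+}(4,1)$ to get $p$, and conclude via Fujiki--Nakano using the $\cO(-1)$ restriction of the normal bundle and the smoothness of $\bM^{0^+}(4,1)$ from $\Ext^2$-vanishing. The two points you flag as the crux are exactly what the paper supplies: the fiberwise normal bundle $N_{\Omega/\bM^{\infty}(4,1)}|_{\PP^3}\simeq\Ext^1(\Lambda_2,\Lambda_1)\otimes\cO_{\PP^3}(-1)$ via the $\Ext$-diagram and the vanishing $\Ext^2(\Lambda_i,\Lambda_j)=0$, and the pointwise $0^+$-stability of the modified family via a Kodaira--Spencer argument showing the modified pair in a normal direction is a nonsplit extension in $\Ext^1((0,\cO_L),(1,\cO_C))$.
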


As we saw above, the flipping locus $\Omega$ is a $\PP^3$-bundle over $\bM(1,1)\times \bM^{0^+}(3,0)$. We first describe the normal space of $\Omega$.
\begin{lemm}\label{lem1}
Let $\Lambda_1:=(1,\cO_C)$ and $\Lambda_2:=(0,\cO_L)$. Then the normal bundle of $\Omega$ in $\bM^{\infty}(4,1)$ restricted to the fiber $\PP^3=\PP(\Ext^1( \Lambda_1, \Lambda_2))$ over a point $[\Lambda_1\oplus \Lambda_2] \in \bM^{3}(4,1)$ is given by
$$
N_{\Omega/\bM^{\infty}(4,1)}|_{\PP^3}\simeq \Ext^1( \Lambda_2, \Lambda_1)\otimes \cO_{\PP^3}(-1).
$$
\end{lemm}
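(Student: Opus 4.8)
The plan is to compute the normal bundle fiberwise by identifying, for a fixed stable-pair splitting type $[\Lambda_1 \oplus \Lambda_2]$ with $\Lambda_1 = (1,\cO_C)$ and $\Lambda_2 = (0,\cO_L)$, the tangent space of $\bM^\infty(4,1)$ at a point of the fiber $\PP^3 = \PP(\Ext^1(\Lambda_1,\Lambda_2))$ and splitting off the tangent directions along $\Omega$ itself. First I would recall from Proposition \ref{depair} that the tangent space at a stable pair $\Lambda$ is $\Ext^1_{\PP^2}(\Lambda,\Lambda)$, and that under the assumption $d=4$, $\chi=1$ (which satisfies the bound in Lemma \ref{lem0}), the moduli space is smooth, so $N_{\Omega/\bM^\infty(4,1)}|_{\PP^3}$ is a genuine vector bundle of rank $\dim\bM^\infty(4,1) - \dim\Omega$. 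Fix a point $[\Lambda] \in \PP^3$ corresponding to a non-split extension $0 \to \Lambda_2 \to \Lambda \to \Lambda_1 \to 0$ (note the ordering: a point of $\PP(\Ext^1(\Lambda_1,\Lambda_2))$ is an extension of $\Lambda_1$ by $\Lambda_2$), and apply $\Hom(-,\Lambda)$ and $\Hom(\Lambda,-)$ to this sequence to express $\Ext^1(\Lambda,\Lambda)$ in terms of the $\Ext$ groups among $\Lambda_1,\Lambda_2$.

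The key step is the bookkeeping of these $\Ext$ groups. Using Proposition \ref{defcoh} together with the explicit descriptions $\Lambda_1 = (1,\cO_C)$ with $C$ a cubic and $\Lambda_2 = (0,\cO_L)$ with $L$ a line, I would compute $\Ext^i(\Lambda_j,\Lambda_k)$ for all $i,j,k$: the diagonal terms $\Ext^\bullet(\Lambda_1,\Lambda_1)$ and $\Ext^\bullet(\Lambda_2,\Lambda_2)$ contribute the tangent directions to $\bM^{0^+}(3,0)$ and $\bM(1,1)$ respectively (the two factors of the base of the $\PP^3$-bundle), the group $\Ext^1(\Lambda_1,\Lambda_2)$ contributes the tangent directions along the $\PP^3$ fiber together with the extension class itself, and $\Ext^1(\Lambda_2,\Lambda_1)$ is the group that survives as the normal direction. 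The relevant vanishings ($\Ext^2(\Lambda_1,\Lambda_2) = 0$ and $\Ext^0(\Lambda_2,\Lambda_1)=0$, etc.) follow from purity, the stability hypotheses, and the constancy of the Euler form as invoked in Remark \ref{rem1}(2); these let the long exact sequences collapse so that the "extra" piece of $\Ext^1(\Lambda,\Lambda)$ beyond the tangent space of $\Omega$ is precisely a copy of $\Ext^1(\Lambda_2,\Lambda_1)$.

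Finally I would track the $\CC^*$-weights to pin down the twist by $\cO_{\PP^3}(-1)$. The universal extension over $\PP^3 \times \PP^2$ is twisted by $\cO_{\PP^3}(-1)$ in the standard way (so that the tautological extension class lives in $\cO_{\PP^3}(1) \otimes \Ext^1(\Lambda_1,\Lambda_2)$); chasing this twist through the long exact sequences above, the contribution $\Ext^1(\Lambda_2,\Lambda_1)$ appears tensored with the dual line, giving $N_{\Omega/\bM^\infty(4,1)}|_{\PP^3} \simeq \Ext^1(\Lambda_2,\Lambda_1) \otimes \cO_{\PP^3}(-1)$ as claimed. I expect the main obstacle to be the careful weight/twist bookkeeping in this last step — it is easy to compute the underlying vector space $\Ext^1(\Lambda_2,\Lambda_1)$ correctly but get the sign of the Serre twist wrong; the cleanest way to fix it is to work with the universal family on $P \times \PP^2$ explicitly and compare with the Euler sequence on the $\PP^3$-bundle, using that along the fiber direction the relative tangent bundle of $P$ is $\cHom(\cO(-1), \mathcal{Q})$ with $\mathcal{Q}$ the relevant quotient, which forces the normal piece to carry weight $-1$.
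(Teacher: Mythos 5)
Your proposal follows essentially the same route as the paper: identify $T_{(1,F)}\bM^\infty(4,1)$ with $\Ext^1((1,F),(1,F))$, decompose it via the long exact sequences obtained by applying $\Hom(-,\cdot)$ and $\Hom(\cdot,-)$ to the defining extension $0\to\Lambda_2\to(1,F)\to\Lambda_1\to 0$, recognize $T_{(1,F)}\Omega$ as the kernel of the composite map to $\Ext^1(\Lambda_2,\Lambda_1)$ (with the three tangent pieces $\Ext^1(\Lambda_1,\Lambda_2)/\CC$, $\Ext^1(\Lambda_2,\Lambda_2)$, $\Ext^1(\Lambda_1,\Lambda_1)$), and conclude by the vanishing of the $\Ext^2$ obstruction groups. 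Your extra attention to the $\cO_{\PP^3}(-1)$ twist via the universal extension is a welcome refinement of the paper's brief remark that ``the consideration is canonical,'' but it is the same argument.
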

\begin{proof}
A pair $(1,F)\in \Omega$ fits into a non split extension
\begin{equation}\label{eq1-1}
\ses{\Lambda_2}{(1,F)}{\Lambda_1}.
\end{equation}

By Proposition \ref{depair}, we know that the first order deformation space of the pair $(1,F)$ in \eqref{eq1-1} is $\Ext^1((1,F),(1,F))$, which fits into the exact diagram:
\begin{equation}\label{eq1-4}
\xymatrix{ &\Ext^1(\Lambda_1,\Lambda_2)\ar[d]&&\Ext^1(\Lambda_1,\Lambda_1)\ar[d]\\
0\ar[r]&\Ext^1((1,F),\Lambda_2)\ar[r]\ar[d]&\Ext^1((1,F),(1,F))\ar[r]^{\phi_1}&\Ext^1((1,F),\Lambda_1)\ar[d]^{\phi_2}\\
&\Ext^1(\Lambda_2,\Lambda_2)&&\Ext^1(\Lambda_2,\Lambda_1).}
\end{equation}
In \eqref{eq1-4}, the $0$ term comes from
\begin{equation}\label{eq1-5}
\Ext^0((1,F),\Lambda_2)=0 \mbox{ and }\Ext^0((1,F),(1,F))=\Ext^0((1,F),\Lambda_1)=\CC.
\end{equation}
Since $(1,F)$, $\Lambda_1$, and $\Lambda_2$ are stable, the first two are obvious. To prove the last one, consider the long exact sequence
$$
0\rightarrow\Ext^0(\Lambda_1,\Lambda_1)=\CC\rightarrow\Ext^0((1,F),\Lambda_1)\rightarrow\Ext^0(\Lambda_2,\Lambda_1)\rightarrow\cdots,$$
which is given by taking $\Hom(-, \Lambda_1)$ to \eqref{eq1-1}.
Here the term $\Ext^0(\Lambda_2,\Lambda_1)=0$ from the slop condition applied to the stable pairs $\Lambda_i$.

Recall that $\Omega$ is a $\PP^3$-bundle over $\bM(1,1)\times \bM^{0^+}(3,0)$. So the tangent space of $\Omega$ at $(1,F)$ is isomorphic to the direct sum of the three extensions:
\begin{equation}\label{eq1-3}
\Ext^1(\Lambda_1,\Lambda_2)/\CC \simeq\CC^3, \Ext^1(\Lambda_2,\Lambda_2)\simeq\CC^2, \mbox{ and } \Ext^1(\Lambda_1,\Lambda_1)\simeq\CC^9,
\end{equation}
where each extension is the first order deformation space of $\PP^3$, $\bM(1,1)$, and $\bM^{0^+}(3,0)$ respectively.
Thus the kernel of the composite map $\phi = \phi_2 \circ \phi_1$ contains the tangent space $T_{(1,F)}\Omega$.
To prove the lemma, it is enough to check that the obstruction spaces vanish.
Once this holds, since this consideration is canonical, this holds for every point in $\PP^3$. Thus if we relativize the above diagram \eqref{eq1-4} over the projective bundle $\Omega$ one can easily see that the restricted normal bundle of $\Omega$ on each fiber $\PP^3$ is isomorphic to $\Ext^1( \Lambda_2\otimes\cO_{\PP^3}(1), \Lambda_1)\cong\Ext^1( \Lambda_2, \Lambda_1)\otimes\cO_{\PP^3}(-1)$. (\cite{tomm})

One can check that
\begin{equation}\label{eq1-2}
\Ext^2(\Lambda_i,\Lambda_j)=0
\end{equation}
for all $i ,j =1,2$. If $i=j$, this directly comes from Remark \ref{rem1} and Serre duality. If $i \ne j$, by using Proposition \ref{defcoh}, we know that it is enough to check
$$
\Ext^2(\cO_L, \cO_C)=\Ext^2(\cO_C, \cO_L)=0.
$$
But this clearly holds by Serre duality again.
\end{proof}

We remark that, by Lemma \ref{lem1}, the exceptional divisor $\widetilde{\Omega}$ of the blow-up morphism
$$q:\widetilde{\bM^{\infty} (4,1)}\lr \bM^{\infty}(4,1)$$
along $\Omega$ is a $\PP^3\times \PP^2$-bundle over $\bM(1,1)\times \bM^{0^+}(3,0)$ and
the normal bundle $\widetilde{\Omega}$ restricted to the fiber $\PP^3\times \PP^2$ is
\begin{equation}\label{eq1-7}
N_{\widetilde{\Omega}/\widetilde{\bM^{\infty} (4,1)}}|_{\PP^3\times \PP^2 }\simeq \cO(-1,-1).\end{equation}

\begin{proof}[Proof of Theorem \ref{mainthm1}]
Let the universal families $\cF'$ and $\cF''$ be as before. Let $\cF$ be a universal pair on $\bM^{\infty}(4,1) \times \PP^2$ \cite[Theorem 4.3]{mhe}. Then the restricted family $\cF|_{\Omega\times \PP^2}$ fits into the short exact sequence
$$
\ses{\cF'}{\cF|_{\Omega\times \PP^2}}{\cF''}.
$$
Let $\cF''_1:=(q|_{\widetilde{\Omega}}\times 1_{\PP^2})^* \cF''$. For each $z \in \widetilde{\Omega}$ such that $q(z)=[(1,F)]$ in \eqref{eq1-1}, we have $\cF''_1|_{\{z\}\times \PP^2}=(1,\cO_C)$ for a cubic curve $C$. Hence the pair $\cF''_1$ is a family of destabilizing quotients of the pull-back of the universal family $\cF$.

Let
$$
\widetilde{\cF}:= elm_{\widetilde{\Omega}}((q\times 1_{\PP^2})^*\cF, \cF''_1)
$$
be the elementary modification of the pull-back of $\cF$ along $\widetilde{\Omega}$.

We claim that $\widetilde{\cF}$ induces a birational morphism to $\bM^{0^+}(4,1)$.
The effect of elementary modification of pairs is the interchange of the sub/quotient of pairs
 \cite[Lemma 4.24]{mhe}. In our case,
this can be proved by analyzing the deformation space of pairs as follows (cf. \cite{CKH}). Choosing a vector $v$ in the tangent space
$$
T_{q(z)}\bM^{\infty}(4,1)=\Hom_{\CC}(\mbox{Spec}\CC[\epsilon]/(\epsilon^2), \bM^{\infty}(4,1))
$$
is the same as having a family $\cF|_{\widetilde{\PP^2}}$ restricted on $\widetilde{\PP^2}:=\mbox{Spec}\CC[\epsilon]/(\epsilon^2) \times \PP^2$
such that the central fiber is $\cF|_{\{0\}\times \PP^2}=(1,F)$ in \eqref{eq1-1}.
If $v\notin T_{q(z)}\Omega$, then the modified pair $\widetilde{\cF}|_{\widetilde{\PP^2}}$ is given by the pulling-back
$$
\xymatrix{ 0\ar[r]&\epsilon \cdot \cF|_{\widetilde{\PP^2}}\ar@{=}[d]\ar[r]&\widetilde{\cF}|_{\widetilde{\PP^2}}\ar@{-->}[d]\ar[r]&(0,\cO_L)\ar[d]\ar[r]&0\\
0\ar[r]&\epsilon \cdot \cF|_{\widetilde{\PP^2}} \ar[r]&\cF|_{\widetilde{\PP^2}}\ar[r]&(1,F)\ar[r]&0,
}
$$
where the right vertical arrow comes from \eqref{eq1-1}. Moreover, the central fiber
$$
\widetilde{\cF}|_{\widetilde{\PP^2}}/\epsilon \cdot \widetilde{\cF}|_{\widetilde{\PP^2}}
$$
is given by the push-out diagram.
$$
\xymatrix{ 0\ar[r]&\epsilon \cdot (1,\cO_C)\ar[r]&\widetilde{\cF}|_{\widetilde{\PP^2}}/\epsilon \cdot \widetilde{\cF}|_{\widetilde{\PP^2}}\ar[r]&(0,\cO_L)\ar[r]&0\\
0\ar[r]&\epsilon \cdot \cF|_{\widetilde{\PP^2}} \ar[r]\ar[u]&\widetilde{\cF}|_{\widetilde{\PP^2}}\ar[r]\ar@{-->}[u]&(0,\cO_L)\ar@{=}[u]\ar[r]&0,
}
$$
where the left vertical arrow comes from \eqref{eq1-1}.
These operations are explained as the following $\CC$-linear map
$$
KS:T_{q(z)}\bM^{\infty}(4,1)\simeq\Ext^1((1,F),(1,F)) \lr \Ext^1((0,\cO_L),(1,F))\lr \Ext^1((0,\cO_L),(1,\cO_C)),
$$
which associates $v \in T_{q(z)}\bM^{\infty}(4,1)$ to $\widetilde{\cF}|_{\{z\}\times \PP^2}$ for $v\neq 0$ of $z\in \widetilde{\Omega}$.
Note that the first isomorphism is the Kodaira-Spencer map and the others are from \eqref{eq1-1}.

On the other hand, by the proof of Lemma \ref{lem1}, the kernel of the map $KS$ is isomorphic to the tangent space $T_{q(z)}\Omega$ at $q(z)$.
Thus the modified sheaf along the normal direction of $\Omega$ is exactly a non split extension class in $\Ext^1((0,\cO_L),(1,\cO_C))$, which turns out to be a $0^+$-stable stable pair by direct calculation.
Hence there is a birational morphism
\begin{equation}\label{eq1-6}
p:\widetilde{\bM^{\infty} (4,1)}\lr \bM^{0^+}(4,1)
\end{equation}
associated to $\widetilde{\cF}$ by the universal property of the moduli space $\bM^{0^+}(4,1)$.

Now, we show that the morphism $p$ in \eqref{eq1-6} is a smooth blow-down contracting the $\PP^3$-direction of the $\widetilde{\Omega}$.
Clearly, the image of $\widetilde{\Omega}$ along the map $p$ is exactly the flipping locus in $\bM^{0^+}(4,1)$ and $p$ contracts the fibers $\PP^3$. So, to apply Fujiki-Nakano criterion \cite{Fujiki}, it is enough to check that
\begin{enumerate}
\item the restricted normal bundle of $\widetilde{\Omega}$ to a fiber $\PP^3$ is $\cO(-1)$ and
\item the space $\bM^{0^+}(4,1)$ is smooth.
\end{enumerate}
Part (1) directly comes from \eqref{eq1-7}. For part (2), let $(1,G)$ be a $0^+$-stable pair in the flipping locus. Then the pair $(1,G)$ fits into an exact sequence
$$
\ses{\Lambda_1=(1,\cO_C)}{(1,G)}{\Lambda_2=(0,\cO_L)}.
$$
By \eqref{eq1-2}, the obstruction $\Ext^2((1,G),(1,G))=0$ as required.
\end{proof}
\subsection{Wall-Crossing for $d=5$}
The walls and the possible type of strictly semistable
pairs are given as the following table.
\begin{center}
\begin{tabular}{|l|p{6cm}|}
\hline
\multicolumn{2}{|l|}{$(d,\chi)=(5,1)$} \\
\hline
$\alpha$
&
$\Lambda_1:=(1,P(F_1))\oplus \Lambda_2:=(0,P(F_2))$\\
\hline
14&
$(1,(4,-2))\oplus (0,(1,3))$\\
\hline
9&
$(1,(4,-1))\oplus (0,(1,2))$\\
\hline
4&
$(1,(4,0))\oplus (0,(1,1))$\\
\hline
$\frac{3}{2}$&$(1,(3,0))\oplus (0,(2,1))$\\
\hline
\end{tabular}
\end{center}
Here, the Hilbert polynomial $P(F_i)=\chi(F_i(m))=dm+\chi$ is denoted by $(d,\chi)$.
\begin{rema}\label{rem3}
As in Remark \ref{rem2}, each wall-crossing locus can be described as a configuration of points on reducible quintic curves. That is, regarding $\bM^{\infty}(5,1)$ as the relative Hilbert scheme $\bB(5,6)$ of six points on quintic curves, the wall-crossing loci are (the strict transformations of) the locus of pairs of six points, five points, four points on a line with a quartic curve at the wall $\alpha=14,9,4$, respectively, and lastly six points on a conic curve with a cubic curve at $\alpha=\frac{3}{2}$. These are very similar to the wall-crossing in \cite[\S 10.5]{coskun}.
\end{rema}

\begin{theo}\label{mainthm2}
Let $\bM^{\alpha}(5,1)$ be the moduli space of
$\alpha$-semistable pairs on $\PP^2$ with Hilbert polynomial
$5m+1$. Then, we have the wall-crossing diagrams
$$
\xymatrix{& \widetilde{\bM^{\infty}(5,1)}\ar[rd]\ar[ld] &&\widetilde{\bM^{c_0}(5,1)}\ar[rd]\ar[ld]& \\
\bM^{\infty}(5,1) \ar[dr]\ar@{<-->}[rr]&&\bM^{c_0}(5,1)\ar[dr] \ar[dl]\ar@{<-->}[rr]& & \bM^{c_1}(5,1)\ar[dl]\\
&\bM^{14}(5,1)&&\bM^{9}(5,1),&}
$$
$$
\xymatrix{& \widetilde{\bM^{c_1}(5,1)}\ar[rd]\ar[ld] &&\widetilde{\bM^{c_2}(5,1)}\ar[rd]\ar[ld]& \\
\bM^{c_1}(5,1) \ar[dr]\ar@{<-->}[rr]&&\bM^{c_2}(5,1)\ar[dr] \ar[dl]\ar@{<-->}[rr]& & \bM^{0^+}(5,1)\ar[dl]\\
&\bM^{4}(5,1)&&\bM^{\frac{3}{2}}(5,1)&}
$$
where the rational numbers $\alpha$'s are $\infty>14$, $c_0 \in (9,14)$, $c_1 \in (4,9)$, $c_2 \in (\frac{3}{2},4)$ and $0^+\in (0,\frac{3}{2})$. All of upper
arrows are smooth blow-up morphisms.
\end{theo}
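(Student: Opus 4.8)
The plan is to run the argument of Theorem~\ref{mainthm1} once at each of the four walls $\alpha=14,9,4,\tfrac32$ recorded in the table, producing the chain
$$
\bM^{\infty}(5,1)\ \dashrightarrow\ \bM^{c_0}(5,1)\ \dashrightarrow\ \bM^{c_1}(5,1)\ \dashrightarrow\ \bM^{c_2}(5,1)\ \dashrightarrow\ \bM^{0^+}(5,1),
$$
in which every step is a smooth blow-up followed by a smooth blow-down. The list of walls and of strictly semistable types is itself obtained from \cite[Theorem 4.2]{mhe} by the same elementary slope computation as in Lemma~\ref{lem2}. The initial space $\bM^{\infty}(5,1)=\bB(5,6)$ is smooth by Lemma~\ref{lem0} (here $1<\tfrac{4+5\cdot 5-25}{2}=2$), and the smoothness of the three intermediate spaces will not be quoted from Lemma~\ref{lem0} but will instead be propagated inductively from the Fujiki--Nakano step below.

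\emph{Flipping loci and their normal bundles.} Fix the wall $\alpha=\alpha_i$ with strictly semistable type $\Lambda_1^i\oplus\Lambda_2^i$, where $\Lambda_1^i=(1,F_1^i)$ carries the section, and let $\Omega_i\subset\bM^{c_{i-1}}(5,1)$ (with $c_{-1}=\infty$) be the preimage of the strictly semistable locus of $\bM^{\alpha_i}(5,1)$. Exactly as in the $d=4$ case, $\Omega_i$ is the image of a closed embedding of the projective bundle $P_i:=\PP\!\left(\Ext^1(\cF_1^i(1),\cF_2^i)\right)$ over a product of moduli spaces of $\alpha$-stable pairs of degree $<5$: for $\alpha_i\in\{14,9,4\}$ the base is $\check{\PP}^{2}\times\bM^{0^+}(4,\chi_i)$ with $\chi_i\in\{-2,-1,0\}$, while for $\alpha_i=\tfrac32$ it is $\bM^{0^+}(3,0)\times\bM(2,1)$; here $\cF_j^i$ are the respective universal families. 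The fibre of $P_i$ is $\PP(\Ext^1(\Lambda_1^i,\Lambda_2^i))$, which (as in Remark~\ref{rem3}) parametrizes the length-$6,5,4$ subscheme on a line at the first three walls and the length-$6$ subscheme on a conic at the last, so it is $\PP^{6},\PP^{5},\PP^{4},\PP^{6}$ respectively; the closed embedding $P_i\hookrightarrow\bM^{c_{i-1}}(5,1)$ is verified by the diagram chase of Remark~\ref{rem2}. One then repeats the proof of Lemma~\ref{lem1}: running the exact diagram \eqref{eq1-4} built from Proposition~\ref{depair}, Proposition~\ref{defcoh} and the two extensions $0\to\Lambda_2^i\to(1,F)\to\Lambda_1^i\to 0$, $\Hom(-,\Lambda_j^i)$, one obtains
$$
N_{\Omega_i/\bM^{c_{i-1}}(5,1)}\big|_{\PP^{m_i}}\ \simeq\ \Ext^1(\Lambda_2^i,\Lambda_1^i)\otimes\cO_{\PP^{m_i}}(-1),
$$
once all obstruction groups $\Ext^2(\Lambda_a^i,\Lambda_b^i)$, $a,b\in\{1,2\}$, are shown to vanish. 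These vanishings are checked as in Lemma~\ref{lem1}: for $a=b$ from Remark~\ref{rem1}(2) (each component again satisfies the hypothesis of Lemma~\ref{lem0}) and Serre duality, and for $a\ne b$ by reducing via Proposition~\ref{defcoh} to $\Ext^2$ of the underlying sheaves on $\PP^2$, which vanish by Serre duality and the dimension of their supports. Consequently the blow-up $q_i\colon\widetilde{\bM^{c_{i-1}}(5,1)}\to\bM^{c_{i-1}}(5,1)$ along the smooth centre $\Omega_i$ is a smooth blow-up, its exceptional divisor $\widetilde{\Omega_i}$ is a $\PP^{m_i}\times\PP^{n_i}$-bundle, and the normal bundle restricts to $\cO(-1,-1)$ on each fibre, exactly as in \eqref{eq1-7}.

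\emph{Elementary modification and Fujiki--Nakano.} On $\widetilde{\bM^{c_{i-1}}(5,1)}\times\PP^2$ form $\widetilde{\cF^i}:=elm_{\widetilde{\Omega_i}}\!\left((q_i\times 1)^*\cF,\ (q_i|_{\widetilde{\Omega_i}}\times1)^*\cF_1^i\right)$ as in Definition~\ref{elem}, the pulled-back $\Lambda_1^i$-component being a family of destabilizing quotients. The Kodaira--Spencer analysis of the proof of Theorem~\ref{mainthm1}, i.e. the $\CC$-linear map $T\bM^{c_{i-1}}(5,1)\simeq\Ext^1((1,F),(1,F))\to\Ext^1(\Lambda_2^i,(1,F))\to\Ext^1(\Lambda_2^i,\Lambda_1^i)$ with kernel $T\Omega_i$, shows that along the normal direction of $\Omega_i$ the modified pair is a nonsplit extension of $\Lambda_1^i$ by $\Lambda_2^i$ with the sub/quotient interchanged, which a direct slope computation identifies as $c_i$-stable. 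Thus $\widetilde{\cF^i}$ induces, by the universal property of $\bM^{c_i}(5,1)$, a birational morphism $p_i\colon\widetilde{\bM^{c_{i-1}}(5,1)}\to\bM^{c_i}(5,1)$ contracting the $\PP^{m_i}$-direction of $\widetilde{\Omega_i}$. To recognise $p_i$ as a smooth blow-down we invoke the Fujiki--Nakano criterion \cite{Fujiki}: the normal bundle of $\widetilde{\Omega_i}$ along a contracted $\PP^{m_i}$ is $\cO(-1)$ by the $\cO(-1,-1)$ statement above, and $\bM^{c_i}(5,1)$ is smooth because it coincides, via $p_i$, with the open smooth subset $\bM^{c_{i-1}}(5,1)\setminus\Omega_i$ away from the new flipping locus, while on that locus the obstruction $\Ext^2((1,G),(1,G))$ vanishes (the same $\Ext^2$-vanishings of the previous paragraph applied to the flipped extension, as in the proof of Theorem~\ref{mainthm1}(2)). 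Feeding this smoothness into the next wall closes the induction; taking $i=0,1,2,3$ yields both wall-crossing diagrams, and in particular each $\widetilde{\bM^{c_{i-1}}(5,1)}$ is simultaneously a smooth blow-up of $\bM^{c_{i-1}}(5,1)$ and of $\bM^{c_i}(5,1)$, so all upper arrows are smooth blow-up morphisms.

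\emph{Where the difficulty lies.} Two points require genuine care. First, the auxiliary base factors of the $P_i$ must be understood precisely enough to see they are smooth and to run the $\Ext^2$-vanishings: $\bM^{0^+}(4,\chi_i)$ for $\chi_i=-2,-1,0$ is not literally $\bM^{\infty}(4,\chi_i)$ in general (there are further, simpler walls for these degree-$4$ polynomials), so one needs the analogous — but lower-dimensional — blow-up/down analysis for $(4,\chi_i)$, together with the classical description of the Simpson space $\bM(2,1)\cong\PP^5$ and the $d\le3$ case for $\bM^{0^+}(3,0)$. Second, and more structurally, the intermediate spaces $\bM^{c_i}(5,1)$ lie outside the range of Lemma~\ref{lem0}, so their smoothness is available only through Fujiki--Nakano and must be carried along the whole chain; this is the step that forces the arguments of the four walls to be done in order rather than independently. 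The last wall $\alpha=\tfrac32$, where the two Jordan--H\"older components have supports of degrees $3$ and $2$ instead of $4$ and $1$, is the most delicate: the two legs of the flip have different fibre dimensions, and computing $\Ext^1(\Lambda_2^4,\Lambda_1^4)$ and describing $\Omega_4$ use the full $d=3$ description together with the geometry of $\bM(2,1)$.
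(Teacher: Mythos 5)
Your proposal follows the paper's own proof essentially verbatim: the paper likewise reduces Theorem~\ref{mainthm2} to rerunning the $d=4$ argument of Theorem~\ref{mainthm1} at the four walls $\alpha=14,9,4,\tfrac32$, verifying the $\Ext^0$ identities \eqref{eq1-5}, the obstruction vanishings \eqref{eq1-2} (the only nonobvious case being $\Ext^2(\cO_L(2),\cO_C)=0$ at $\alpha=14$, done by Serre duality), and the expected dimensions of $\Ext^1(\Lambda_1,\Lambda_2)$ and $\Ext^1(\Lambda_2,\Lambda_1)$ (giving your $\PP^{6},\PP^{5},\PP^{4},\PP^{6}$ fibres and $\PP^3,\PP^3,\PP^3,\PP^5$ normal directions), before invoking the elementary-modification and Fujiki--Nakano mechanism. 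The one small discrepancy is your parenthetical claim that the degree-$4$ components see further walls: the paper observes (and uses in \S\ref{compbetti}) that $\Lambda_1$ and $\Lambda_2$ are $\alpha$-stable for every $\alpha$, so $\bM^{0^+}(4,\chi_i)=\bB(4,\chi_i+2)$; this does not affect the validity of your argument.
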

\begin{proof}
Let us denote the first (resp. second) stable component in table by $\Lambda_1$ (resp. $\Lambda_2$). Note that in any cases, $\Lambda_1$ and $\Lambda_2$ are $\alpha$-stable for any $\alpha$, because there is no wall for those types.

The proof is parallel to that of Theorem \ref{mainthm1}. It suffices to check the equations \eqref{eq1-5}, \eqref{eq1-3}, and \eqref{eq1-2} at each wall.

The same argument as before checks \eqref{eq1-5}.

For the vanishing of obstructions (equation \eqref{eq1-2}), recall that $\Ext^2(\Lambda_i, \Lambda_i)=0$ for all $i$ by Lemma \ref{lem0}. To check
\begin{equation}\label{eq:deg5obs}
\Ext^2(\Lambda_i, \Lambda_j)=0 \mbox{ for } i\neq j,
\end{equation}
it suffices to check
$$
\Ext^2(F_i, F_j)=0 \mbox{ for } i\neq j
$$
by Proposition \ref{defcoh}.
If $i<j$ or $\alpha<14$, then this holds obviously by Serre duality and stability. The remaining case is at $\alpha=14$.
That is, we prove $\Ext^2(\cO_L(2),\cO_C)=0$ for a quartic curve $C$ and line $L$. By Serre duality again,
$$\Ext^2(\cO_L(2),\cO_C)\simeq\Ext^0(\cO_C, \cO_L(-1))^*$$
But the latter group is zero since $$\Ext^0(\cO_C, \cO_L(-1))\subset\Ext^0(\cO, \cO_L(-1))=H^0(\cO_L(-1))=0.$$

Next we show that the first order deformation spaces have the expected dimensions (equation \eqref{eq1-3}). That is,
\begin{align}
&\Ext^1((1,F_{4m-2}),(0,\cO_L(2)))=\CC^7, &&\Ext^1((1,F_{4m-1}),(0,\cO_L(1)))=\CC^6,\label{eq2-1}\\
&\Ext^1((1,F_{4m}),(0,\cO_L))=\CC^5, && \Ext^1((1,F_{3m}),(0,\cO_Q))=\CC^7, \notag
\end{align}
where $L$ (resp. $Q$) is a line (resp. conic) and $F_{p(m)}$ is a semistable sheaf with Hilbert polynomial $p(m)$.

Let $(1,F)$ be one of $(1,F_{4m-2})$, $(1,F_{4m-1})$, or $(1,F_{4m})$. Then, $(1,F)$ fits into an exact sequence
$$\ses{(1,\cO_C)}{(1,F)}{(0,Q)},$$
for a quartic curve $C$ and a zero dimensional sheaf $Q$.
By applying $\Hom(-,\cO_L(k))$ for appropriate $k$ ($k=0, 1,\textrm{ or }2$), we get an exact sequence
\begin{align*}
0&\to \Ext^1 ((0,Q), (0,\cO_L(k)))\to \Ext^1((1,F), (0,\cO_L(k))) \to \Ext^1((1,\cO_C), (0,\cO_L(k))) \\
&\to\Ext^2 ((0,Q), (0,\cO_L(k)))\to 0,
\end{align*}
because $\Hom((1,\cO_C), (0,\cO_L(k)))$ is clearly zero and $\Ext^1((1,F), (0,\cO_L(k)))$ is also zero by \eqref{eq:deg5obs}. By the Riemann-Roch theorem, as $Q$ is a zero dimensional sheaf, we have
\[
\dim \Ext^1 ((0,Q), (0,\cO_L(k))) - \dim \Ext^2 ((0,Q), (0,\cO_L(k))) =0.
\]
Hence it is enough to compute $\dim \Ext^1((1,\cO_C), (0,\cO_L(k)))$.

From the short exact sequence
$$\ses{(0,\cO(-4))}{(1,\cO)}{(1,\cO_C)},$$
we have
$$0\rightarrow \Ext^0((0,\cO(-4)),(0,\cO_L(k)))\stackrel{\sim}{\rightarrow} \Ext^1((1,F_{4m-2}),(0,\cO_L(k))) \rightarrow 0.$$
Recall that $k$ can be 0, 1, or 2. The first zero term is $\Hom((1,\cO),(0,\cO_L(k)))=0$ and the last term is from $\Ext^1((1,\cO),(0,\cO_L(k)))=0$, which can be seen by Proposition \ref{defcoh} because $\Ext^0(\cO,\cO_L(k))=\Hom(\CC\cdot(1), H^0(\cO_L(k)))$ and $H^1(\cO_L(k))=0$. Thus
$$\Ext^1((1,F),(0,\cO_L(k)))=H^0(\cO_L(k+4)).$$
This proves the first three of \eqref{eq2-1}.

For the last one, since $F_{3m}=\cO_C$ for some cubic curve $C$ we have an exact sequence
$$
\ses{(0,\cO(-3))}{(1,\cO)}{(1,F_{3m})},
$$
we have
$$
0\to \Ext^0((0,\cO(-3)),(0,\cO_Q))
\stackrel{\sim}{\to} \Ext^1((1,F_{3m}),(0,\cO_Q) \rightarrow \Ext^1((1,\cO),(0,\cO_Q))=0.
$$
The first zero term is clear as before. The last term comes from $\Ext^0(\cO,\cO_Q)=\Hom(\CC\cdot(1), H^0(\cO_Q))$ and $H^1(\cO_Q)=0$. Thus
$$\Ext^1((1,F_{3m}),(0,\cO_Q))=H^0(\cO_Q(3))=\CC^7.$$

Lastly we should check that the normal spaces of the flipping loci in each wall-crossing have the expected dimensions. That is, under the same notation as above, we should check
\begin{align}
&\Ext^1((0,\cO_L(2),(1,F_{4m-2})))=\CC^4, &&\Ext^1((0,\cO_L(1), (1,F_{4m-1})))=\CC^4,\\
&\Ext^1((0,\cO_L),(1,F_{4m}))=\CC^4, && \Ext^1((0,\cO_Q),(1,F_{3m}))=\CC^6. \notag
\end{align}
But these are easily checked by using a diagram of the form \eqref{eq1-4} since the extension groups of the second order are all vanished.
\end{proof}
We state a similar theorem for $\bM^{\alpha}(5,-1)$ for later use. We omit the proof since it is parallel with that of Theorem \ref{mainthm2}.
\begin{theo}\label{prop3}
There exist wall-crossing diagrams among $\bM^{\alpha}(5,-1)$
$$
\xymatrix{& \widetilde{\bM^{\infty}(5,-1)}\ar[rd]\ar[ld] &&\widetilde{\bM^{c_0}(5,-1)}\ar[rd]\ar[ld]& \\
\bM^{\infty}(5,-1) \ar[dr]\ar@{<-->}[rr]&&\bM^{c_0}(5,-1)\ar[dr] \ar[dl]\ar@{<-->}[rr]& & \bM^{0^+}(5,-1)\ar[dl]\\
&\bM^{6}(5,-1)&&\bM^{1}(5,-1)&}
$$
such that the above arrows are all smooth blow-up morphisms and the walls occur at $\alpha=6$ and $1$. Moreover, the blow-up centers are $\PP^4$-bundle over $\bM^{0^+}(4,-2)\times \bM(1,1)$ and $\PP^3$-bundle over $\bM^{0^+}(4,-1)\times \bM(1,1)$, respectively.
\end{theo}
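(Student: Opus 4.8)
The plan is to run, with the numerology of $5m-1$ replacing that of $5m+1$, the arguments of Theorems \ref{mainthm1} and \ref{mainthm2}; I only point out the changes. First one locates the walls. By \cite[Theorem 4.2]{mhe} a wall sits at each $\alpha\in(0,\infty)$ admitting a strictly $\alpha$-semistable pair $\Lambda_1\oplus\Lambda_2=(1,(d_1,\chi_1))\oplus(0,(d_2,\chi_2))$ with $d_1+d_2=5$, $\chi_1+\chi_2=-1$ and $\alpha=\tfrac{d_1\chi_2}{d_2}-\chi_1>0$, subject to $\Lambda_1$ being a nonempty locus of stable pairs, which forces $\chi_1\ge\chi(\cO_{C_1})=\tfrac{d_1(3-d_1)}{2}$ for the degree-$d_1$ support curve $C_1$. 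Going through $d_1=1,2,3,4$ as in Lemma \ref{lem2}, the only admissible possibilities are $d_1=4$ with $\chi_1=-2$, giving $\alpha=6$ and type $(1,(4,-2))\oplus(0,(1,1))$, and $\chi_1=-1$, giving $\alpha=1$ and type $(1,(4,-1))\oplus(0,(1,0))$. In both cases $\Lambda_1$ and $\Lambda_2$ are $\alpha$-stable for every $\alpha$ since the polynomials $4m-2$, $4m-1$, $m+1$, $m$ carry no wall, and $\bM^\infty(5,-1)\cong\bB(5,4)$ is smooth by Lemma \ref{lem0} (here $-1<\tfrac{4+5\cdot5-5^2}{2}=2$), which is what is needed to start.

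Next I would describe the flipping locus $\Omega$ at each wall. As in Remark \ref{rem2} and the discussion before Theorem \ref{mainthm1}, a pair of $\Omega$ fits into a non-split extension $0\to\Lambda_2\to(1,F)\to\Lambda_1\to 0$, so by the universal families of \cite[Theorem 4.3]{mhe}, $\Omega\cong\PP(\Ext^1(\Lambda_1,\Lambda_2))$ as a bundle over $\bM^{0^+}(4,\chi_1)\times\bM(1,\chi_2)$, the degree-$1$ factor being $\cong\PP^2$ in both cases. The fibre dimension is computed as for \eqref{eq2-1}: picking a filtration $0\to\cO_{C_1}\to F_1\to Q\to 0$ with $Q$ zero-dimensional, then resolving $\cO_{C_1}$ by $0\to\cO(-4)\to\cO\to\cO_{C_1}\to 0$, Proposition \ref{defcoh} gives $\Ext^1\bigl((1,F_1),(0,\cO_L(k))\bigr)\cong H^0(\cO_L(k+4))$, which is $\CC^5$ at $\alpha=6$ (where $k=0$) and $\CC^4$ at $\alpha=1$ (where $k=-1$). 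Hence the two blow-up centres are a $\PP^4$-bundle over $\bM^{0^+}(4,-2)\times\bM(1,1)$ and a $\PP^3$-bundle over $\bM^{0^+}(4,-1)\times\bM(1,1)$, as stated.

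The substance of the proof, carried out word for word as in Theorems \ref{mainthm1} and \ref{mainthm2}, is to check at each wall the analogues of \eqref{eq1-5}, \eqref{eq1-3} and \eqref{eq1-2}. The $\Hom$-vanishings of \eqref{eq1-5} come from stability and slope comparison of $\Lambda_1$, $\Lambda_2$, $(1,F)$; the deformation spaces of \eqref{eq1-3} are $\Ext^1(\Lambda_1,\Lambda_2)$ as above, $\Ext^1(\Lambda_2,\Lambda_2)=\CC^2$, and $\Ext^1(\Lambda_1,\Lambda_1)=T_{\Lambda_1}\bM^{0^+}(4,\chi_1)$ of the expected dimension; and the obstructions $\Ext^2(\Lambda_i,\Lambda_j)$ vanish --- for $i=j$ by Lemma \ref{lem0} and Serre duality, and for $i\ne j$ by reducing via Proposition \ref{defcoh} to $\Ext^2(F_i,F_j)=0$, which for a quartic sheaf against a line sheaf follows from the resolution of $\cO_{C_1}$ and Serre duality, just as in the $\alpha=14$ case ($\Ext^2(\cO_{C_1},\cO_L(k))=0$, and $\Ext^2(\cO_L(k),\cO_{C_1})\cong\Hom(\cO_{C_1},\cO_L(-3-k))^*\subset H^0(\cO_L(-3-k))^*=0$ for $k=0,-1$). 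Granted all this, the diagram \eqref{eq1-4} argument of Lemma \ref{lem1} gives $N_{\Omega}|_{\PP^r}\cong\Ext^1(\Lambda_2,\Lambda_1)\otimes\cO_{\PP^r}(-1)$ with $r=4$, resp.\ $3$, so the blow-up $\widetilde{\bM}$ of the ambient moduli space along $\Omega$ is smooth and its exceptional divisor is a $\PP^r\times\PP^{r'}$-bundle (with $r'+1=\dim\Ext^1(\Lambda_2,\Lambda_1)$) carrying $\cO(-1,-1)$ on each fibre; the elementary modification $elm_{\widetilde\Omega}$ along the universal destabilizing quotient $(1,\cO_{C_1})$ produces, via the Kodaira--Spencer computation in the proof of Theorem \ref{mainthm1}, a flat family of pairs stable in the adjacent chamber, hence a birational morphism onto the moduli space on the far side of the wall; and since that target is smooth (its flipping locus again has vanishing obstructions), the Fujiki--Nakano criterion \cite{Fujiki} identifies this morphism as the smooth blow-down contracting the $\PP^r$-direction. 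Concatenating the two walls gives the stated diagram.

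As in Theorem \ref{mainthm2}, the step I expect to be the main obstacle is the obstruction vanishing at the outer wall $\alpha=6$: there the quartic $C_1$ supporting $F_1$ may degenerate to one containing the line $L$, so one cannot invoke generic irreducibility of $C_1$ and must instead run the resolution computation uniformly over all of $\Omega$, exactly as was done for the $\alpha=14$ wall. A secondary subtlety, again parallel to Theorem \ref{mainthm1}, is the hands-on check that the pair obtained from the elementary modification is genuinely stable --- not merely semistable --- in the chamber on the other side of each wall.
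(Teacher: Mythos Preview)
Your proposal is correct and follows exactly the route the paper intends: the paper omits the proof of this theorem, stating only that it is parallel to that of Theorem~\ref{mainthm2}, and you have carried out precisely that parallel argument with the correct numerology at each wall. Your identification of the second component at $\alpha=1$ as $(0,(1,0))$ is right; the statement writes $\bM(1,1)$, but this is harmless since $\bM(1,0)\cong\bM(1,1)\cong(\PP^2)^\vee$.
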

Similarly as in Remark \ref{rem3}, the wall-crossing loci in Theorem \ref{prop3} can be explained in a geometric way. Again, each wall-crossing is very similar to that of \cite[\S 10.3]{coskun}.
%
%

\section{Forgetful Morphisms and the Brill-Noether Loci}

Recall Proposition \ref{prop4}. If $\alpha=0^+$, there is a forgetful morphism
$$
\xi: \bM^{0^+}(d,\chi) \lra \bM(d, \chi)
$$
which forgets the section of the $0^+$-stable pair. When $\chi=1$, this map is a birational morphism and its exceptional locus is the Brill-Noether locus of the space $\bM(d, 1)$.
Let
$$
\bM(d, \chi)_k:= \{F| h^0(F)=k\}
$$
be the subscheme of $\bM(d, \chi)$, so called \emph{Brill-Noether stratum} and
$
\bM^{0^+}(d,\chi)_k:= \xi^{-1}(\bM(d, \chi)_k)
$
be the inverse image of $\bM(d, \chi)_k$ along $\xi$. We always give the reduced induced scheme structure. Then, it is immediate that $\{\bM^{0^+}(d,\chi)_k\}$ is a locally closed stratification of $\bM^{0^+}(d,\chi)$.

\begin{defi}
Let $F$ be a coherent sheaf of codimension $c$ on a smooth projective variety $X$. Then the dual sheaf is defined as $F^D=\mathcal{E}xt^c_X(F,\omega_X)$.
\end{defi}

In \cite[Theorem 13]{maican3}, it is shown that the association $F\mapsto F^D$ gives an isomorphism between the moduli spaces $\bM(d,\chi)$ and $\bM(d,-\chi)$. Moreover, we have the following.

\begin{prop}\label{prop6}
\begin{enumerate}
\item When $d$ and $\chi$ are coprime, the restriction map \\ $\bM^{0^+}(d,\chi)_k \lr \bM(d, \chi)_k$ is a Zariski locally trivial fibration with fiber $\PP^{k-1}$.
\item There is a natural isomorphism $\bM(d, \chi)_k\simeq \bM(d, -\chi)_{k-\chi}$ which sends $F$ to $F^D$.
\end{enumerate}
\end{prop}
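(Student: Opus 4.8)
The plan is to handle the two parts separately, since part (2) is essentially formal once the moduli-space duality isomorphism of \cite[Theorem 13]{maican3} is in hand, while part (1) requires a genuine construction. For part (2), I would start from the isomorphism $\bM(d,\chi)\simeq \bM(d,-\chi)$, $F\mapsto F^D$, recalled just before the statement. The key point is that dualizing a pure one-dimensional sheaf on $\PP^2$ exchanges $h^0$ with $h^1$ of the dual, via Serre duality: for $F$ pure of dimension $1$ on $\PP^2$ one has $F^D=\cExtdot^1(F,\omega_{\PP^2})$ and $H^0(F^D)\simeq H^1(F)^\ast$ (and $H^1(F^D)\simeq H^0(F)^\ast$), together with the vanishing $H^2(F)=H^2(F^D)=0$ coming from purity. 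Combining this with Riemann--Roch, $h^0(F)-h^1(F)=\chi$, one gets that $h^0(F)=k$ forces $h^1(F)=k-\chi$, hence $h^0(F^D)=k-\chi$; thus $F\mapsto F^D$ carries $\bM(d,\chi)_k$ into $\bM(d,-\chi)_{k-\chi}$, and the same computation applied to $F^D$ (using $(F^D)^D\simeq F$) gives the inverse inclusion. Since the strata carry the reduced induced structure and the ambient map is an isomorphism, this restricts to an isomorphism of strata.

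For part (1), fix $d,\chi$ coprime, so that $\bM(d,\chi)$ is a fine moduli space: there is a universal sheaf $\cF$ on $\bM(d,\chi)\times\PP^2$. Let $p\colon \bM(d,\chi)\times\PP^2\to\bM(d,\chi)$ be the projection. Restricting over the stratum $\bM(d,\chi)_k$, by cohomology-and-base-change the function $y\mapsto h^0(\cF_y)$ is constantly $k$, so $p_\ast(\cF|_{\bM(d,\chi)_k\times\PP^2})$ is locally free of rank $k$ and commutes with base change; call it $\cV$. A $0^+$-stable pair lying over a sheaf $F\in\bM(d,\chi)_k$ is exactly a choice of one-dimensional subspace $s\subset H^0(F)$ (every such pair is $0^+$-stable because $\chi=1$... more precisely one must check $0^+$-stability of $(s,F)$ for a stable $F$ and an arbitrary line $s$, which is the standard slope computation), so $\bM^{0^+}(d,\chi)_k$ is naturally identified with the projectivization $\PP(\cV^\vee)$ (or $\PP(\cV)$, depending on convention) over $\bM(d,\chi)_k$, with fiber $\PP^{k-1}$. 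This identification is Zariski-locally trivial because $\cV$ is a vector bundle, and it is compatible with the forgetting morphism $\xi$ by construction of the universal pair on $\bM^{0^+}(d,\chi)$ (\cite[Theorem 4.3]{mhe}) — one checks that the two universal objects agree on $\bM^{0^+}(d,\chi)_k$ via the universal property, so the projection $\PP(\cV^\vee)\to\bM(d,\chi)_k$ coincides with the restriction of $\xi$.

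The main obstacle I anticipate is the bookkeeping needed to identify the fibration $\bM^{0^+}(d,\chi)_k\to\bM(d,\chi)_k$ with $\PP(p_\ast\cF)$ \emph{as schemes}, not just set-theoretically: one must match the universal pair on $\bM^{0^+}$ with the tautological sub-line-bundle on $\PP(p_\ast\cF)$ using the respective universal properties, and one must be slightly careful since $\bM^{0^+}(d,\chi)$ is fine (as $d,\chi$ are coprime, the $0^+$-stable locus inherits fineness) so that this comparison is legitimate. Everything else — the cohomology-and-base-change argument for local freeness of $p_\ast\cF$, the $0^+$-stability check for every line $s\subset H^0(F)$, and the Serre-duality bookkeeping in part (2) — is routine. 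I would also remark that part (1) plus part (2) together are what let one transport the stratification and its projective-bundle structure between $\chi$ and $-\chi$, which is how the $d=5$ Brill--Noether analysis is reduced to $\bM^\alpha(5,-1)$ as mentioned in \S4.
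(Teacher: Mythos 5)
Your proposal is correct and follows essentially the same route as the paper: part (1) via the universal sheaf, constancy of $h^0$ on the stratum, and the resulting projectivization of the rank-$k$ direct image bundle, and part (2) via the identification $h^0(F^D)=h^0(F)-\chi$ coming from the local-to-global spectral sequence/Serre duality (the paper cites this as \cite[Corollary 6]{maican3}). Your extra care about matching universal objects and checking $0^+$-stability of $(s,F)$ for stable $F$ is exactly the routine verification the paper defers to \cite[Section 4.2]{choithesis}.
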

\begin{proof}
We sketch the proof for the convenience of reader. For the detail, see Section 4.2 in \cite{choithesis}.
Let $\cF$ be a universal family of stable sheaves on $\bM(d, \chi)\times \PP^2$ and $p$ be the projection to the first factor.
Since the dimension of the zero cohomology group of stable sheaves in $\bM(d, \chi)_k$ is constantly $k$, the direct image sheaf $p_*\cF$ is locally free sheaf of rank $k$ on $\bM(d, \chi)_k$ (\cite[Corollary 12.9, III]{Hartshorne}).
Thus the projective bundle $\PP(p_*\cF^*|_{\bM(d, \chi)_k})$ with fiber $\PP^{k}$ is isomorphic to $\bM^{0^+}(d,\chi)_k$. This prove (1).

For (2), by using the local-to-global spectral sequence, one can check that if $F$ is a pure sheaf with Hilbert polynomial $dm+\chi$, then $h^0(F^D)=h^0(F)-\chi$ \cite[Corollary 6]{maican3}, \cite[Proposition 4.2.8]{choithesis}.
\end{proof}
We will see later in Lemma \ref{lem3} that by using the description of the stratification in the above proposition, one can obtain the Betti numbers of $\bM(d,1)$ from those of $\bM^{0^+}(d,1)$ and $\bM^{0^+}(d,-1)$.

When $d=4$ or $5$, through the wall-crossing analysis in previous section and the results of \cite{choithesis, maican4}, the Brill-Noether strata have the following geometric descriptions.
\begin{prop}\label{prop1}
\begin{enumerate}
\item For $d=4$ or $5$, $\bM(d,1)_k=\emptyset$ for $k\geq d-1$.
\item $\bM(4,1)_2\simeq \bM^{0^+}(4,-1)$.
\item The Brill-Noether locus of $\bM(5,1)$ consists of two components $$\bM(5,1)_2\cup\bM(5,1)_3$$ such that $\bM(5,1)_3 \simeq\bB(5,1)$ and $\overline{\bM(5,1)_2}\simeq\xi(\bM^{0^+}(5,-1))$. Moreover, $\bM(5,1)_3 \subset \overline{\bM(5,1)_2}$.
\end{enumerate}
\end{prop}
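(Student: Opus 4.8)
The plan is to treat the three statements in the order listed, since (1) is a prerequisite for giving the stratification a finite length, (2) is an essentially formal consequence of Proposition \ref{prop6} together with the classification of stable sheaves on $\PP^2$ of degree $4$ (as in \cite{maican}), and (3) is the genuinely substantive part, requiring the degree-$5$ classification and the $\bM^{\alpha}(5,-1)$ wall-crossing of Theorem \ref{prop3}.

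\medskip
\noindent\textbf{Part (1).} First I would recall that a stable sheaf $F$ with Hilbert polynomial $dm+1$ is of the form $\cO_C(D)$ (or a limit thereof) for a plane curve $C$ of degree $d$, and bound $h^0(F)$ using the restriction sequences that govern such sheaves on $\PP^2$: the point is that a section of $F$ cuts out an effective divisor on the support, and for $F$ to remain stable the support cannot be too degenerate. Concretely, if $h^0(F)=k$ then $F$ maps onto a quotient supported on a degree $\leq$ something curve, and comparing the slope $\chi/d$ of $F$ with that of the quotient forces $k\leq d-2$; for $d=4,5$ this is exactly $k\leq d-2$, i.e. $\bM(d,1)_k=\emptyset$ for $k\geq d-1$. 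I would cite \cite{maican,maican4,choithesis} for the precise classification input rather than redo the case analysis.

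\medskip
\noindent\textbf{Part (2).} By Proposition \ref{prop6}(2) applied with $d=4$, $\chi=1$, $k=2$, we get an isomorphism $\bM(4,1)_2\simeq \bM(4,-1)_1$. By part (1) (applied via the $F\mapsto F^D$ symmetry, or directly) and genericity, $\bM(4,-1)_1$ is the locus of sheaves with a unique section, so by Proposition \ref{prop6}(1) the map $\bM^{0^+}(4,-1)_1\to\bM(4,-1)_1$ is a $\PP^0$-fibration, i.e. an isomorphism; and for $d=4$, $\chi=-1$ the general stable sheaf already has $h^0=1$ (Euler characteristic reasons, since $\chi=-1$ forces $h^1\geq 1$, $h^0\geq 0$, and the generic sheaf achieves $h^0=1$), so in fact $\bM^{0^+}(4,-1)_1=\bM^{0^+}(4,-1)$. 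Chaining these identifications gives $\bM(4,1)_2\simeq\bM^{0^+}(4,-1)$, which is the claim. The one point needing care is that $\bM^{0^+}(4,-1)$ really is a single Brill--Noether stratum and not a proper subscheme; I would verify this by noting $\bM(4,-1)_k=\emptyset$ for $k\geq 2$ from part (1).

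\medskip
\noindent\textbf{Part (3), and the main obstacle.} For $d=5$, part (1) allows only $k=2$ and $k=3$, so the Brill--Noether locus is $\bM(5,1)_2\cup\bM(5,1)_3$. For the stratum $k=3$: a stable sheaf with $h^0=3$ and Hilbert polynomial $5m+1$ must, again by the classification, be $\cO_C$ for a quintic curve $C$ together with the choice of a length-$1$ subscheme giving the "extra" sections — more precisely, one identifies $\bM(5,1)_3$ with the relative Hilbert scheme $\bB(5,1)$ of one point on the universal quintic, by matching the three sections of such an $F$ with the $3$-dimensional linear system, and noting $n = 1 - \frac{5(3-5)}{2} = 1$ fits the formula $n=\chi-\frac{d(3-d)}{2}$. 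For $\overline{\bM(5,1)_2}$: by Proposition \ref{prop6}(2), $\bM(5,1)_2\simeq\bM(5,-1)_1$, and the latter, being the locus of the general stable sheaf of type $5m-1$ (where $h^0=1$ generically), is the image of the forgetting map $\xi:\bM^{0^+}(5,-1)\to\bM(5,-1)$; passing back through the $F\mapsto F^D$ isomorphism $\bM(5,-1)\simeq\bM(5,1)$ identifies $\overline{\bM(5,1)_2}$ with $\xi(\bM^{0^+}(5,-1))$. The irreducibility of this closure, and hence of the whole Brill--Noether locus, follows because $\bM^{0^+}(5,-1)$ is irreducible (it is obtained from the irreducible $\bM^{\infty}(5,-1)=\bB(5,\ldots)$ by the blow-up/blow-down wall-crossings of Theorem \ref{prop3}, each of which preserves irreducibility), and $\xi$ is surjective onto its image. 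The hardest point — and the one I would spend the most effort on — is the last containment $\bM(5,1)_3\subset\overline{\bM(5,1)_2}$: one must show that an $h^0=3$ sheaf is a degeneration of $h^0=2$ sheaves. I would argue this by a dimension count combined with an explicit deformation: exhibit a one-parameter family of stable sheaves whose general member is a pure sheaf supported on an \emph{irreducible} quintic with $h^0=2$ and whose special member is $\cO_C$ with $C$ a quintic admitting extra sections (e.g. degenerating the support or an extension parameter), or equivalently run the argument on the $\bM^{0^+}(5,-1)$ side where the analogous incidence of Brill--Noether strata is visible from the wall-crossing geometry and the strata dimensions computed in \cite{choithesis,maican4}. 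Showing that $\dim\bM(5,1)_3<\dim\overline{\bM(5,1)_2}$ and that $\bM(5,1)_3$ lies in the frontier — rather than in a separate component — is where the real work is, and I would lean on the stratification bookkeeping of \cite{choithesis} together with the explicit description of the relevant flipping loci in Remark \ref{rem3} to pin down the incidence.
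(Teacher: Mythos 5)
Your proposal follows essentially the same route as the paper: part (1) is quoted from the classification in \cite{maican,maican4}, part (2) is the duality of Proposition \ref{prop6} plus the observation that the $k=1$ stratum exhausts $\bM^{0^+}(4,-1)$, and part (3) combines the classification of general points in the strata with the wall-crossing description of $\bM^{0^+}(5,-1)$ from Theorem \ref{prop3}, irreducibility, and a degeneration for the final inclusion (the paper's degeneration is the specific one of letting the four points in $\cO_C(2)(-p_1-p_2-p_3-p_4)$ become collinear, quoted from \cite[Proposition 3.3.4]{maican4}). Two small slips to fix: the formula $n=\chi-\tfrac{d(3-d)}{2}$ gives $n=6$ for $(d,\chi)=(5,1)$, so $\bB(5,1)$ corresponds to $\chi=-4$ and the identification $\bM(5,1)_3\simeq\bB(5,1)$ goes via $(C,p)\mapsto\cO_C(1)(p)$ rather than via your computation; and the image of $\xi:\bM^{0^+}(5,-1)\to\bM(5,-1)$ is the whole closed locus $h^0\ge 1$, i.e. $\bM(5,-1)_1\cup\bM(5,-1)_2$, not just the open stratum $\bM(5,-1)_1$, which is exactly why the last containment $\bM(5,1)_3\subset\overline{\bM(5,1)_2}$ is needed to conclude $\xi(\bM^{0^+}(5,-1))=\overline{\bM(5,1)_2}$.
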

\begin{proof}
Part (1): This is \cite[\S 3]{maican} for $d=4$ and \cite[\S 3.1]{maican4} for $d=5$. A simpler proof for $d=4$ can be found in \cite[Lemma 4.6.3]{choithesis}.

Part (2): By Proposition \ref{prop6}, $$\bM(4,1)_2\simeq\bM(4,-1)_1\simeq \bM^{0^+}(4,-1)_1.$$ The last space $\bM^{0^+}(4,-1)_1=\bM^{0^+}(4,-1)$ because of Proposition \ref{prop6}.(2) and part (1).

Part (3): By \cite[Proposition 3.1.5 and Proposition 3.3.3]{maican4}, the general points in $\bM(5,1)_2$ (resp. $\bM(5,1)_3$) consist of stable sheaves of the form $\cO_C(2)(-p_1-p_2-p_3-p_4)$ (resp. $\cO_C(1)(p)$) for four points $p_i$ (resp. a point $p$) in general position on smooth quintic curves. Then, obviously, $\bM(5,1)_3\simeq \bB(5,1)\simeq \bM^{0^+}(5,-4)$. Also, as we have seen in Theorem \ref{prop3}, $\bM^{0^+}(5,-1)$ is obtained from the moduli space $\bB(5,4)$ by two times wall-crossings where the pairs in the flipping locus are supported on reducible quintic curves. Hence the general sheaves are of the form
\begin{equation}\label{eq3-1}
\cO_C(p_1+p_2+p_3+p_4)
\end{equation}
under the above condition. Through the composition of the forgetful and the dual map
$$
\xi:\bM^{0^+}(5,-1)\lr \bM(5,-1)=\bM(5,1),
$$
theses sheaves in \eqref{eq3-1} exactly correspond to the general points in $\bM(5,1)_2$ \cite[Proposition 3.3.3]{maican4}. Since $\bM(5,1)_2$ and $\xi(\bM^{0^+}(5,-1))$ are both irreducible, we get $\xi(\bM^{0^+}(5,-1))=\overline{\bM(5,1)_2}$. The last inclusion is easily proved by deforming four general points $p_i$ into colinear ones \cite[Proposition 3.3.4]{maican4}.
\end{proof}
\begin{prop}\label{prop5}
The forgetful morphism $ \xi: \bM ^{0^+}(4,1) \lra \bM(4,1)$ is a smooth blow-up along the moduli space $\bB(4,1)=\bM(4,1)_2$.
\end{prop}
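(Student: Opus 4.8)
\emph{Plan of proof.} The plan is to identify $\xi$ with the blow-up of $\bM(4,1)$ along the smooth subvariety $Y:=\bB(4,1)=\bM(4,1)_2$, using the universal property of blow-ups \cite[II.7.14]{Hartshorne}; the same conclusion can alternatively be reached from the Fujiki--Nakano criterion \cite{Fujiki} exactly as in the proof of Theorem \ref{mainthm1}. First I would assemble the coarse geometry of $\xi$. The source $\bM^{0^+}(4,1)$ is smooth of dimension $17$: it is birational to $\bM^{\infty}(4,1)\simeq\bB(4,3)$ by Proposition \ref{prop0} (a projective bundle over $Hilb^3(\PP^2)$ by Lemma \ref{lem0}), and smoothness everywhere is established in part~(2) of the proof of Theorem \ref{mainthm1}. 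The target $\bM(4,1)$ is smooth of dimension $d^2+1=17$, since $\gcd(4,1)=1$ forces all semistable sheaves to be stable and $\Ext^2(F,F)\simeq\Hom(F,F(-3))^{\vee}=0$ by stability. The centre $Y\simeq\bB(4,1)$ (Proposition \ref{prop1}(2) together with Proposition \ref{prop0}) is a projective bundle over $Hilb^1(\PP^2)=\PP^2$ by Lemma \ref{lem0}, hence smooth of dimension $15$. Since $\chi(F)=h^0(F)-h^1(F)=1$ forces $h^0\geq 1$ and $\bM(4,1)_k=\emptyset$ for $k\geq 3$ by Proposition \ref{prop1}(1), we have $\bM(4,1)=\bM(4,1)_1\sqcup Y$, with $\bM(4,1)_1=\bM(4,1)\setminus Y$ the locus on which the section of a $0^+$-stable pair is unique; thus $\xi$ is bijective, and so an isomorphism by Zariski's Main Theorem and normality, over $\bM(4,1)\setminus Y$. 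Over $Y$ the fibre of $\xi$ is $\PP^1$ by Proposition \ref{prop6}(1), so $E:=\bM^{0^+}(4,1)_2=\xi^{-1}(Y)$ is a prime divisor, $Y$ has codimension $2$, and $\xi$ is a divisorial contraction collapsing the $\PP^1$-bundle $\pi\colon E\to Y$.

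Next I would identify the normal bundle of $Y$ in $\bM(4,1)$ with a direct image sheaf. Let $\cF$ be a universal sheaf on $\bM(4,1)\times\PP^2$ and $p$ the first projection; since $\cF$ is fibrewise supported on curves, $H^{\geq 2}$ vanishes and $Rp_*\cF$ is represented globally by a two-term complex $[\cK^0\xrightarrow{\ \phi\ }\cK^1]$ of locally free sheaves with $\rank\cK^0-\rank\cK^1=1$. On $\bM(4,1)_1$ the map $\phi$ is fibrewise of maximal rank; on $Y$ the numbers $h^0=2$ and $h^1=1$ are constant, so $\ker(\phi|_Y)=p_*\cF|_Y$ is locally free of rank $2$, $\coker(\phi|_Y)=R^1p_*\cF|_Y$ is a line bundle, and $Y$ is exactly the locus where $\rank\phi$ drops by one, a degeneracy locus of the expected codimension $2\cdot 1=2$. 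By the standard description of the normal bundle of an expected-codimension degeneracy locus (equivalently, of a Brill--Noether stratum),
\[ N_{Y/\bM(4,1)}\ \simeq\ \mathcal{H}om\bigl(p_*\cF|_Y,\,R^1p_*\cF|_Y\bigr)\ \simeq\ (p_*\cF|_Y)^{\vee}\otimes R^1p_*\cF|_Y. \]
I would double-check this against the direct deformation-theoretic computation: at $[F]\in Y$ one has $T_{[F]}\bM(4,1)=\Ext^1(F,F)$ and $T_{[F]}Y=\ker\bigl(\Ext^1(F,F)\to\Hom(H^0(F),H^1(F))\bigr)$, using Proposition \ref{defcoh} and the local structure of the stratum.

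Finally I would match $E$ with the exceptional divisor of $\mathrm{Bl}_Y\bM(4,1)$ and conclude. By Proposition \ref{prop6}(1) and its proof, $E=\PP(p_*\cF|_Y)$ as a $\PP^1$-bundle over $Y$, a point over $[F]\in Y$ being a line $s\subset H^0(F)=(p_*\cF)_{[F]}$. Since $R^1p_*\cF|_Y$ is a line bundle and a rank-$2$ bundle $V$ satisfies $V\simeq V^{\vee}\otimes\det V$, the displayed formula yields a canonical isomorphism over $Y$,
\[ \PP\bigl(N_{Y/\bM(4,1)}\bigr)\ \simeq\ \PP\bigl((p_*\cF|_Y)^{\vee}\bigr)\ \simeq\ \PP\bigl(p_*\cF|_Y\bigr)\ =\ E, \]
which is precisely the exceptional divisor of $\mathrm{Bl}_Y\bM(4,1)$. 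One then checks, by the deformation-theoretic analysis of $\Ext^1((s,F),(s,F))$ via Proposition \ref{defcoh} --- the line $\Hom(s,H^0(F)/s)$ being the tangent space along the $\PP^1$-fibre of $\pi$ --- that $\cO_E(E)$ restricts to $\cO_{\PP^1}(-1)$ on each such fibre, exactly as in \eqref{eq1-7}. Granting this, $E$ is a Cartier divisor on the smooth variety $\bM^{0^+}(4,1)$, the ideal $\xi^{-1}\cI_Y\cdot\cO_{\bM^{0^+}(4,1)}=\cO(-E)$ is invertible (with multiplicity one, because $\xi$ is an isomorphism in codimension one off $E$), and \cite[II.7.14]{Hartshorne} produces a morphism $\bM^{0^+}(4,1)\to\mathrm{Bl}_Y\bM(4,1)$ over $\bM(4,1)$; it carries $E$ isomorphically onto $\PP(N_{Y/\bM(4,1)})$, hence is quasi-finite and birational onto a smooth target, hence an isomorphism. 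Therefore $\xi$ is the smooth blow-up of $\bM(4,1)$ along $\bB(4,1)=\bM(4,1)_2$. The main obstacle is the pair of local computations just indicated --- the identification of $N_{Y/\bM(4,1)}$ and, above all, the verification that $\cO_E(E)$ is the relative $\cO(-1)$; once these are in place, everything else is bookkeeping with the smoothness of $\bM^{0^+}(4,1)$, $\bM(4,1)$, $\bB(4,1)$ and the structure theorems of \S3--\S4.
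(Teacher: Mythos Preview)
Your plan is correct and follows essentially the same strategy as the paper: identify the normal bundle of the centre, match its projectivization with the exceptional divisor $E=\bM^{0^+}(4,1)_2$, and conclude by the universal property of blow-ups \cite[II.7.14]{Hartshorne}. The only notable difference is in how the normal bundle is computed. The paper works on the $\bB(4,1)\simeq\bM^{0^+}(4,-1)$ side via the duality $F\mapsto F^{D}$: for $(1,F)\in\bB(4,1)$ the tangent map of the inclusion $\bB(4,1)\hookrightarrow\bM(4,1)$ is read off directly from the exact sequence of Proposition~\ref{defcoh},
\[
0\to\Ext^1((1,F),(1,F))\to\Ext^1(F,F)\to H^1(F)\to 0,
\]
yielding $N_{\bB(4,1)/\bM(4,1)}\simeq (p_*\cF^{D})^{*}$ in one step. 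You instead stay on the $\bM(4,1)_2$ side and view $Y$ as the degeneracy locus of $Rp_*\cF$, obtaining $N\simeq (p_*\cF|_Y)^{\vee}\otimes R^1p_*\cF|_Y$ and then using the rank-$2$ self-duality trick to identify $\PP(N)$ with $E=\PP(p_*\cF|_Y)$. Both routes are valid; the paper's is a line shorter because the Ext sequence gives the normal space without the degeneracy-locus machinery, while yours makes the Brill--Noether structure more transparent. One small point: your justification ``multiplicity one, because $\xi$ is an isomorphism in codimension one off $E$'' is not quite the right reason; multiplicity one follows rather from the rank of $d\xi$ along $E$, which is exactly what your $\cO_E(E)|_{\PP^1}\simeq\cO(-1)$ check (or the paper's normal-bundle identification) provides.
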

\begin{proof}
Let $\mathcal{F}$ be a universal family of stable sheaves on
$\bB(4,1) \times \PP^2$ and $p$ be the projection into
the first factor $\bB(4,1)$. By (2) in Proposition \ref{prop1}, there is a morphism
$$
\bB(4,1)\simeq \bM(4,1)_2 \subset \bM(4,1),
$$
where the stable pair $(1,F)$ corresponds to its dual $F^{D}$.
The tangent map of this morphism is presented by
$$
0\rightarrow\Ext^1((1,F),(1,F))=T_{(1,F)}\bB(4,1)\rightarrow \Ext^1(F,F)\simeq \Ext^1(F^D,F^D)=T_F \bM(4,1)\rightarrow H^1(F) \rightarrow 0,
$$
where the last term is zero by (2) of Remark \ref{rem1}. Therefore
$$
N_{\bB(4,1)/\bM(4,1),F}\simeq H^1(F)\simeq H^0(F^D)^*,
$$
where the last isomorphism is given by \cite[Proposition 4.2.8]{choithesis}.
Since this isomorphism is canonical, we can say that the normal bundle of $\bB(4,1)$ in $\bM(4,1)$ is isomorphic with the dual of the direct image sheaf of the projection $p$
$$
N_{\bB(4,1)/\bM(4,1)}\simeq (p_*\mathcal{F}^D)^*.
$$
Since $H^1(F)$ is constant fiberwisely, the direct image sheaf $p_*\mathcal{F}^D$ is a locally free sheaf of rank two
on $\bB(4,1)$. So the projective bundle
$$\mathbf{P}:=\PP((p_*\mathcal{F}^D)^*)$$
is a $\PP^1$-bundle over $\bB(4,1)$. As we have seen in Proposition \ref{prop6}, there is a closed embedding
$$i:\mathbf{P}\hookrightarrow\bM ^{0^+}(4,1)$$
such that there is a commutative diagram
$$
\xymatrix{ \mathbf{P}\ar@{^(->}[r]\ar[d]&\bM^{0^+}(4,1)\ar[d]^{\xi}\\
\bB(4,1)\simeq \bM(4,1)_2\ar@{^(->}[r]&\bM(4,1).
}
$$
Obviously, the image of the $\mathbf{P}$ by $i$ is the exceptional divisor of $\xi$ and thus the morphism $\xi$ is a smooth blow-up morphism.
\end{proof}

\begin{rema}
By using part (3) in Proposition \ref{prop1}, one can easily check that the forgetful map
$$
\xi: \bM^{0^+}(5,1)\lr \bM(5,1)
$$
is also a divisiorial contraction but not a smooth one. We remark that the contracted divisor $\bM^{0^+}(5,1)_{k\geq2} $ is an irreducible variety, which can be geometrically proved by considering the wall-crossings of the moduli spaces $\bM^{\alpha}(5,-1)$ with Hilbert polynomial $5m-1$ (Theorem \ref{prop3}).
\end{rema}
\section{Betti Numbers}\label{compbetti}
In this section, we present two corollaries of Theorem \ref{mainthm1} and Theorem \ref{mainthm2}. By using the wall-crossing formula, one can easily obtain all Betti numbers of Simpson
spaces $ \bM(4,1)$ and $ \bM(5,1)$.
For a variety $X$, let us define the \emph{Poincar\'e polynomial} of $X$ by
\[ P(X)=\sum_{i\ge 0} \dim_{\QQ} H^{i}(X,\QQ) q^{i/2}. \]
Since odd cohomology groups of moduli spaces of our interests always vanish, $P(X)$ is a \emph{polynomial}.
\begin{lemm}\label{lem3}
For any degree $d\ge 1$, we have
\[ P(\bM(d,1))= P(\bM^{0^+}(d,1))- q P(\bM^{0^+}(d,-1)). \]
\end{lemm}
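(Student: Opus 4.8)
The plan is to compare the two known contractions through $\bM^{0^+}(d,1)$ and to use additivity of Poincar\'e polynomials under the stratifications already set up in \S4. First I would recall from Proposition \ref{prop1}.(1) that $\bM(d,1)_k=\emptyset$ for $k\ge d-1$, so that the Brill--Noether stratification of $\bM(d,1)$ is finite; write $\bM(d,1)=\bigsqcup_{k\ge 1}\bM(d,1)_k$, where $k$ ranges only over finitely many values and $\bM(d,1)_1$ is the open dense locus where $\xi$ is an isomorphism. Since odd cohomology vanishes for all the spaces involved (stated in \S\ref{compbetti}), the class in the Grothendieck ring / the ``virtual Poincar\'e polynomial'' is additive over locally closed stratifications, so $P(\bM(d,1))=\sum_{k\ge 1} P(\bM(d,1)_k)$ and likewise $P(\bM^{0^+}(d,1))=\sum_{k\ge 1} P(\bM^{0^+}(d,1)_k)$, with $P(\bM^{0^+}(d,1)_1)=P(\bM(d,1)_1)$.

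Next I would apply Proposition \ref{prop6}.(1): over each stratum with $k\ge 1$, $\bM^{0^+}(d,1)_k\to\bM(d,1)_k$ is a Zariski-locally trivial $\PP^{k-1}$-bundle (here $d$ and $1$ are coprime, so the hypothesis is met), hence
\[
P(\bM^{0^+}(d,1)_k)=P(\bM(d,1)_k)\cdot(1+q+\cdots+q^{k-1})=P(\bM(d,1)_k)\cdot\frac{q^{k}-1}{q-1}.
\]
Summing over $k\ge 1$ and subtracting off the $k=1$ terms, which cancel, gives
\[
P(\bM^{0^+}(d,1))-P(\bM(d,1))=\sum_{k\ge 2} P(\bM(d,1)_k)\,(q+q^2+\cdots+q^{k-1}).
\]
Now I would use Proposition \ref{prop6}.(2): the dual-sheaf isomorphism identifies $\bM(d,1)_k\simeq\bM(d,-1)_{k-1}$, and combined again with Proposition \ref{prop6}.(1) (applied to $\chi=-1$, still coprime to $d$) gives a locally trivial $\PP^{k-2}$-bundle $\bM^{0^+}(d,-1)_{k-1}\to\bM(d,-1)_{k-1}\simeq\bM(d,1)_k$, whence $P(\bM^{0^+}(d,-1)_{k-1})=P(\bM(d,1)_k)(1+q+\cdots+q^{k-2})$. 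Therefore
\[
q\,P(\bM^{0^+}(d,-1))=\sum_{k\ge 2} q\,P(\bM(d,1)_k)\,(1+q+\cdots+q^{k-2})=\sum_{k\ge 2} P(\bM(d,1)_k)\,(q+q^2+\cdots+q^{k-1}),
\]
where the $k=1$ stratum contributes nothing because $\bM^{0^+}(d,-1)$'s stratum there, which maps to $\bM(d,1)_1$ via the $\PP^{-1}$-bundle convention, is genuinely empty (equivalently, index the sum $k\ge 2$ from the start using that $\bM(d,1)_1$ plays no role). Comparing the two displays yields $P(\bM^{0^+}(d,1))-P(\bM(d,1))=q\,P(\bM^{0^+}(d,-1))$, which is the claim.

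The only genuine point to be careful about — the ``main obstacle'' such as it is — is bookkeeping of which strata are nonempty and matching the bundle ranks, i.e. checking that under the chain of isomorphisms $\bM^{0^+}(d,1)_k\leftrightarrow\bM(d,1)_k\leftrightarrow\bM(d,-1)_{k-1}\leftrightarrow\bM^{0^+}(d,-1)_{k-1}$ the fiber dimensions are $\PP^{k-1}$ and $\PP^{k-2}$ respectively, so that the factor of $q$ appears with the right power after multiplying through; together with the remark that $\bM(d,1)_k=\emptyset$ for large $k$ (Proposition \ref{prop1}.(1)), which guarantees all sums are finite and no convergence issue arises. Everything else is the additivity of $P(-)$ over stratifications and over Zariski-locally trivial projective bundles, both of which are standard given the vanishing of odd cohomology.
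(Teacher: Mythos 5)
Your proposal is correct and follows essentially the same route as the paper: stratify both sides by the Brill--Noether loci, use Proposition \ref{prop6}.(1) to express each stratum of the pair spaces as a $\PP^{k-1}$- (resp.\ $\PP^{k-2}$-) bundle, and use the duality of Proposition \ref{prop6}.(2) to re-index $\bM(d,-1)_k\simeq\bM(d,1)_{k+1}$ before telescoping. The paper carries out the identical cancellation directly as a single chain of equalities, so there is nothing substantive to add.
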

\begin{proof}
This is the Poincar\'e polynomial version of \cite[Proposition 4.2.9]{choithesis}. 
By Proposition \ref{prop6}, we have
\begin{align*}
P(\bM^{0^+}&(d,1))- q P(\bM^{0^+}(d,-1)) \\
& = \sum_{k\ge 1} P(\PP^{k-1})\cdot P(\bM(d,1)_k) - q \sum_{k\ge 1} P(\PP^{k-1})\cdot P(\bM(d,-1)_k) \\
& = \sum_{k\ge 1} P(\PP^{k-1})\cdot P(\bM(d,1)_k) - q \sum_{k\ge 1} P(\PP^{k-1})\cdot P(\bM(d,1)_{k+1}) \\
& = \sum_{k\ge 1} (P(\PP^{k-1}) - q P(\PP^{k-2}))\cdot P(\bM(d,1)_{k}) \\
& = \sum_{k\ge 1} P(\bM(d,1)_k) \\
&=  P(\bM(d,1)),
\end{align*}
where $P(\PP^{-1}):=0$.
\end{proof}

\begin{coro}\label{betti4}
The Poincar\'e polynomial of the Simpson space $ \bM(4,1)$ is given by
\begin{multline*}
1+2q+6q^2+10q^3+14q^4+15q^{5}+16q^{6}+16q^{7}+16q^{8} \\
+16q^{9}+16q^{10}+16q^{11}+15q^{12}+14q^{13}+10q^{14}+6q^{15}+2q^{16}+q^{17}.
\end{multline*}
\end{coro}
\begin{proof}
We know $\bM^{\infty}(4,1)\simeq \bB(4,3)$ is a $\PP^{11}$-bundle over $\mbox{Hilb}^3(\PP^2)$. In \cite{ele}, the Poincar\'e polynomial of $\mbox{Hilb}^3(\PP^2)$ is given by
$$P(H ilb^{3}(\PP^2))=1+2q+5q^2+6q^3+5q^4+2q^5+q^6.$$

Now, by the wall-crossing of Theorem \ref{mainthm1}, we obtain the Poincar\'e polynomial of $\bM^{0^+}(4,1)$.
\begin{align*}
  P(&\bM^{0^+}(4,1)) = P(\PP^{11})\cdot P(\mbox{Hilb}^{3}(\PP^2)) - (P(\PP^3)-P(\PP^2))P(\PP^9\times\PP^2)\\
  &=\frac{1-q^{12}}{1-q}\cdot(1+2q+5q^2+6q^3+5q^4+2q^5+q^6)- q^3\cdot\frac{1-q^{10}}{1-q}\cdot \frac{1-q^3}{1-q}
\end{align*}
On the other hand, $\bM^{0^+}(4,-1)\simeq\bM^{\infty}(4,-1)\simeq \bB(4,1)$ is a $\PP^{13}$-bundle over $\PP^2$.
$$P(\bM^{0^+}(4,-1))= P(\PP^{13})\cdot P(\PP^2)=\frac{1-q^{14}}{1-q}\cdot\frac{1-q^3}{1-q} $$
Therefore, we obtain the Poincar\'e polynomial by Lemma \ref{lem3}.
\end{proof}
\begin{coro}\label{coro2}
The Poincar\'e polynomial of the Simpson space $\bM(5,1)$ is given by
\begin{multline*}
1 + 2q+6q^{2}+13q^{3}+26q^{4}+45q^{5}+68q^{6}+87q^{7}+100q^{8}+107q^{9} \\
+111q^{10}+112q^{11}+113q^{12}+113q^{13}+113q^{14}+112q^{15}+111q^{16}+107q^{17} \\
+100q^{18}+87q^{19}+68q^{20}+45q^{21}+26q^{22}+13q^{23}+6q^{24}+2q^{25}+q^{26}.
\end{multline*}
\end{coro}
\begin{proof}
From Theorem \ref{mainthm2}, we have
\begin{align*}
  P(\bM^{0^+}(5,1))= P(\bB(5,6)) & +(P(\PP^3)-P(\PP^6))\cdot P(\bB(4,0))\cdot P(\PP^{2})\\
  &+(P(\PP^3)-P(\PP^5))\cdot P(\bB(4,1))\cdot P(\PP^{2})\\
  &+(P(\PP^3)-P(\PP^4))\cdot P(\bB(4,2))\cdot P(\PP^{2})\\
  &+(P(\PP^5)-P(\PP^6))\cdot P(\bB(3,0))\cdot P(\PP^5).
\end{align*}
By Theorem \ref{prop3}, we have
\begin{align*}
P(\bM^{0^+}(5,-1))= P(\bB(5,4)) & +(P(\PP^3)-P(\PP^4))\cdot P(\bB(4,0))\cdot P(\PP^{2})\\
&+(P(\PP^3)-P(\PP^3))\cdot P(\bB(4,1))\cdot P(\PP^{2}). \
\end{align*}
Thus we obtain the result by Lemma \ref{lem3}.
\end{proof}
\begin{rema}
\begin{enumerate}
\item The results in Corollary \ref{betti4} and \ref{coro2} coincide with predictions in physics \cite{hkk} by B-model calculation. Also, these are consistent with the results in \cite{sahin, choithesis, yuan2, cm, maican5}.
\item It has been conjectured that the topological Euler characteristics of $\bM(d,1)$ are equal to the genus zero Gopakumar-Vafa invariants up to sign \cite{katz_gv}. By specializing the Poincar\'e polynomials to $q=1$, we can see that the topological Euler characteristics of moduli spaces $\bM(4,1)$ and $\bM(5,1)$ are $192$ and $1675$ respectively, which matches with the prediction in physics \cite{KKV}. Moreover, our approach provides another explanation to the correction terms in the computation of Goparkumar-Vafa invariants in \cite{KKV}. See \cite{ckk} for more details.
\end{enumerate}
\end{rema}


\section{Euler Characteristic of the Space $\bM^{0^+}(4,3)$}
In this section, we calculate the Euler characteristic $\chi(\bM^{0^+}(4,3))$ of the moduli space $\bM^{0^+}(4,3)$ via the wall-crossing technique we have been using. We will encounter a new type of wall where strictly semistable pairs can have a Jordan-H\"{o}lder filtration of length 3. 

The possible types of strictly semistable pairs are as follows.
\begin{center}
\begin{tabular}{|l|p{6cm}|}
\hline
\multicolumn{2}{|l|}{$(d,\chi)=(4,3)$} \\
\hline
$\alpha$
&
$(1,P(F))\oplus (0,P(F'))$\\
\hline
9&
$(1,(3,0)\oplus (0,(1,3))$\\
\hline
5&
$(1,(3,1))\oplus (0,(1,2))$\\
\hline
1&
$(1,(3,2))\oplus (0,(1,1))$\\
\hline
$1$&
$(1,(2,1))\oplus (0,(2,2))$\\
\hline
$1$&
$(1,(2,1))\oplus (0,(1,1))\oplus (0,(1,1))$\\
\hline
\end{tabular}
\end{center}

At the walls $\alpha=9$ and $\alpha=5$, semistable pairs can only have a length two Jordan-H\"{o}lder filtration. Similarly as
before, it can be shown that there are flip diagrams at these walls. We may compute how the Euler characteristic changes as we cross these wall:
\[
\chi(\bM^\infty(4,3))= \chi(\bB(4,5))=1080.
\]
\begin{align*}
\chi(\bM^{5<\alpha<9}(4,3))& = \chi(\bM^\infty(4,3)) +(\chi(\PP^2)-\chi(\PP^5))\cdot \chi(\bB(3,0))\cdot \chi(\PP^{2})\\
& =1080 - 3\cdot 10\cdot 3 = 990 .
\end{align*}
\begin{align*}
\chi(\bM^{1<\alpha<5}(4,3))& = \chi(\bM^{5<\alpha<9}(4,3)) +(\chi(\PP^2)-\chi(\PP^4))\cdot \chi(\bB(3,1))\cdot \chi(\PP^{2})\\
& =990 - 2\cdot 27\cdot 3 = 828 .
\end{align*}

At the wall $\alpha=1$, strictly semistable pairs can split into either $(1,(3,2))\oplus (0,(1,1))$ or $(1,(2,1))\oplus (0,(2,2))$. Moreover it is possible that the component $(1,(3,2))$ in the first decomposition and $(0,(2,2))$ in the second decomposition are strictly semistable, so they may split further into stable pieces, which gives the last case $(1,(2,1))\oplus (0,(1,1))\oplus (0,(1,1))$.

The following lemmas are elementary.
\begin{lemm}
Suppose $\alpha>1$. A pair $(1,F)$ given by an extension
\begin{equation}\label{eq:A1+}
\ses{\Lambda':=(0,\cO_L)}{(1,F)}{\Lambda:=(1,F_{3m+2})}
\end{equation}
is $\alpha$-stable if and only if the short exact sequence is nonsplit and $\Lambda$ is $\alpha$-stable.
\end{lemm}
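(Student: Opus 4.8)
The plan is to prove both implications by a see-saw argument, i.e.\ by testing the defining $\alpha$-slope inequality against every proper subsheaf of $F$, after first pinning down the numerics. Write $\mu_\alpha(G,\delta)=\frac{\chi(G(m))+\delta\alpha}{r(G)}$ for the quantity compared in the definition of $\alpha$-(semi)stability, and let $\pi\colon F\twoheadrightarrow F_{3m+2}$ be the surjection in \eqref{eq:A1+}. A one-line computation gives $\mu_\alpha(\Lambda')=m+1$, $\mu_\alpha(1,F)=m+\frac{3+\alpha}{4}$ and $\mu_\alpha(\Lambda)=m+\frac{2+\alpha}{3}$, so for $\alpha>1$ one has the strict chain $\mu_\alpha(\Lambda')<\mu_\alpha(1,F)<\mu_\alpha(\Lambda)$. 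The arithmetic backbone is the identity $\mu_\alpha(\pi^{-1}(B),\delta_B)=\frac{(m+1)+r(B)\,\mu_\alpha(B,\delta_B)}{1+r(B)}$ for $B\subseteq F_{3m+2}$, whose comparison with $\mu_\alpha(1,F)$ is decided, after clearing denominators, by the single bound $r(B)\le 3$ together with $\alpha>1$.

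For the ``only if'' implication, assume $(1,F)$ is $\alpha$-stable. If \eqref{eq:A1+} were split, then $\Lambda$ would be a subpair with $\mu_\alpha(\Lambda)>\mu_\alpha(1,F)$, contradicting stability; hence it is nonsplit. If $\Lambda$ were not $\alpha$-stable, choose a destabilizing subsheaf (equivalently quotient sheaf) of $F_{3m+2}$; pulling it back through $\pi$ produces a proper subsheaf (resp.\ quotient) of $F$ with the same $\delta$, and the identity above, via $r(\cdot)\le 3$ and $\alpha>1$, forces a violation of stability of $(1,F)$. Hence $\Lambda$ is $\alpha$-stable.

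For the ``if'' implication, assume \eqref{eq:A1+} nonsplit with $\Lambda$ $\alpha$-stable. First, $F$ is pure: its maximal $0$-dimensional subsheaf maps to $0$ in the pure sheaf $F_{3m+2}$, hence lies in the pure sheaf $\cO_L$, hence vanishes. Now fix $0\ne F'\subsetneq F$ and set $A=F'\cap\cO_L\subseteq\cO_L$, $B=\pi(F')\subseteq F_{3m+2}$, so that $0\to A\to F'\to B\to 0$ and the section of $(1,F)$ factors through $F'$ only if that of $\Lambda$ factors through $B$. I would split into three cases. If $A=\cO_L$, then $F'=\pi^{-1}(B)$ with $B\subsetneq F_{3m+2}$ proper, and $\alpha$-stability of $\Lambda$ (or the trivial bound $\mu_\alpha(\cO_L,0)=m+1<\mu_\alpha(1,F)$ when $B=0$, which uses $\alpha>1$) together with the backbone identity gives $\mu_\alpha(F')<\mu_\alpha(1,F)$. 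If $0\subsetneq A\subsetneq\cO_L$, then either $B=F_{3m+2}$, in which case $F/F'$ is $0$-dimensional of positive length and the inequality is immediate because $r(F')=r(F)$, or $B\subsetneq F_{3m+2}$, in which case $\mu_\alpha(F')<\mu_\alpha(\pi^{-1}(B))$ reduces to the first case; both use only purity of $F$ and $\alpha>0$. If $A=0$, then $\pi$ identifies $F'$ with a proper subsheaf $B\subsetneq F_{3m+2}$ over which \eqref{eq:A1+} splits; my plan is to compare $F'$ with $F'+\cO_L=\pi^{-1}(B)$, handled by the first case, and then to push the estimate back down to $F'$.

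The step I expect to be the main obstacle is this last case (and, symmetrically, the pullback step in the ``only if'' direction). There $\alpha$-stability of $\Lambda$ only bounds $\mu_\alpha(F')$ strictly below $\mu_\alpha(\Lambda)$, which for $\alpha>1$ is \emph{strictly above} the target $\mu_\alpha(1,F)$, so one must exclude an $F'$ whose $\alpha$-slope lands in the half-open window $[\mu_\alpha(1,F),\mu_\alpha(\Lambda))$, of length $\frac{\alpha-1}{12}$. Here one writes $\mu_\alpha(F',\delta)=m+\frac{\chi(F')+\delta\alpha}{r(F')}$ with $\chi(F')\in\ZZ$, $\delta\in\{0,1\}$ and $1\le r(F')\le 3$ (since $F'\hookrightarrow F_{3m+2}$), and runs through the finitely many pairs $(r(F'),\delta)$: in each, either the window contains no value of the indicated shape, or such an $F'$ would already destabilize $\Lambda$ (for example a rank-one $F'$ with $\delta=1$ has $h^0(F')\ge1$, hence $\chi(F')\ge1$, hence $\mu_\alpha(F',1)>\mu_\alpha(\Lambda)$), or no pure sheaf of the prescribed Hilbert polynomial admits a section at all. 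This interplay of integrality, $\alpha$-stability of $\Lambda$, and realizability is the only part that is not a completely mechanical slope computation, and it is where the hypothesis ``$\alpha$ just past the wall at $1$'' is genuinely used.
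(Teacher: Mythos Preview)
The paper gives no proof of this lemma beyond declaring it ``elementary,'' so there is nothing to compare your argument against line by line; your see-saw outline via the filtration $A=F'\cap\cO_L$, $B=\pi(F')$ is a correct and standard way to verify such a statement, and your identification of the $A=0$ case as the place where integrality of $\chi(F')$ and the specific value $\alpha>1$ really enter is exactly right.

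There is, however, a genuine slip in your second paragraph. You assert that if $B$ destabilizes $\Lambda$ then $\pi^{-1}(B)$ (or the corresponding quotient) destabilizes $(1,F)$ ``via $r(\cdot)\le 3$ and $\alpha>1$.'' This is not what your backbone identity gives. Writing $\mu_\alpha(\pi^{-1}(B))=\frac{(m+1)+r(B)\mu_\alpha(B)}{1+r(B)}$ and $\mu_\alpha(1,F)=\frac{(m+1)+3\mu_\alpha(\Lambda)}{4}$, the comparison reduces to
\[
(3-r(B))\bigl[(m+1)-\mu_\alpha(\Lambda)\bigr]+4r(B)\bigl[\mu_\alpha(B)-\mu_\alpha(\Lambda)\bigr]\ \ge\ 0,
\]
and for $\alpha>1$ the first bracket is \emph{negative} while $3-r(B)>0$, so the bound $\mu_\alpha(B)\ge\mu_\alpha(\Lambda)$ alone is inconclusive. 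The quotient version has the same defect: a destabilizing quotient $\Lambda''$ of $\Lambda$ satisfies $\mu_\alpha(\Lambda'')\le\mu_\alpha(\Lambda)$, but $\mu_\alpha(\Lambda)>\mu_\alpha(1,F)$, so one cannot conclude $\mu_\alpha(\Lambda'')\le\mu_\alpha(1,F)$ directly. In other words, the ``only if'' direction is \emph{not} more automatic than the ``if'' direction; it requires the same finite case analysis on $(r,\delta,\chi)$ that you sketch at the end. You do flag this parenthetically (``and, symmetrically, the pullback step in the `only if' direction''), but the earlier paragraph should not claim the violation is forced by the identity alone. Once you run that same integrality argument on both sides, the proof goes through.
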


The analogous statement for $\alpha<1$ also holds.

\begin{lemm}
Suppose $\alpha<1$. A pair $(1,F)$ given by an extension
\begin{equation}\label{eq:A1-}
\ses{\Lambda:=(1,F_{3m+2})}{(1,F)}{\Lambda':=(0,\cO_L)}
\end{equation}
is $\alpha$-stable if and only if the short exact sequence is nonsplit and $\Lambda$ is $\alpha$-stable.
\end{lemm}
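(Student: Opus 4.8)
The plan is to prove this by the same stability-inequality analysis that underlies Lemma \ref{lem2} and the tables in \S3, now in the $\alpha<1$ regime. First I would fix $\alpha<1$ and suppose we have a nonsplit extension \eqref{eq:A1-} with $\Lambda=(1,F_{3m+2})$ an $\alpha$-stable pair, and check $\alpha$-stability of $(1,F)$ directly from the definition. The key numerical input is that the reduced Hilbert polynomial (with the $+\alpha$ correction on the section) of $\Lambda$ is \emph{smaller} than that of $(1,F)$ precisely when $\alpha<1$: concretely, $\Lambda$ has type $(1,(3,2))$ and $\Lambda'=(0,\cO_L)$ has type $(0,(1,1))$, so one compares $\frac{3m+2+\alpha}{3}$ against $\frac{4m+3+\alpha}{4}$ and $\frac{m+1}{1}$, and the decisive fact is that $(1,(3,2))$ is the destabilizing \emph{sub}object on the $\alpha<1$ side, not the quotient — this is exactly the content of Lemma \ref{lem2}-style bookkeeping. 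Any destabilizing subsheaf $F'\subset F$ (with induced section data) would, after intersecting with and projecting to the two terms of \eqref{eq:A1-}, produce a subpair of $\Lambda$ or of $\Lambda'$ violating stability of one of them, unless the sequence splits; so nonsplitness plus $\alpha$-stability of $\Lambda$ forces $\alpha$-stability of $(1,F)$.

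For the converse, I would argue contrapositively: if the sequence \eqref{eq:A1-} splits, then $(1,F)\cong \Lambda\oplus\Lambda'$ is visibly not $\alpha$-stable (it is at best strictly semistable, the wall being at $\alpha=1$), and if $\Lambda$ is not $\alpha$-stable then a destabilizing subpair of $\Lambda$ pulls back along the inclusion $\Lambda\hookrightarrow(1,F)$ to a destabilizing subpair of $(1,F)$, using that the inclusion preserves whether the section factors through a subobject. One must also rule out that $(1,F)$ is $\alpha$-stable while $\Lambda$ is only strictly $\alpha$-semistable; here the point is that a strictly semistabilizing subpair $\Lambda''\subset\Lambda$ of the \emph{same} reduced Hilbert polynomial as $\Lambda$ remains a subpair of $(1,F)$ of reduced Hilbert polynomial strictly less than (for $\alpha<1$) — wait, equal to — that of $\Lambda$, hence $\le$ that of $(1,F)$ with equality impossible for $\alpha$-stability unless $\Lambda''=\Lambda$. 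So the careful inequality check at the two neighbouring chambers is what does the work, and this is the analogue of the first two cases in the $(4,3)$ table being genuine walls rather than spurious coincidences.

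The main obstacle, as in Lemma \ref{lem2}, is purely the combinatorial verification that no \emph{other} subsheaf $F'\subsetneq F$ can destabilize $(1,F)$ — one has to run through the possible Hilbert polynomials $(d',\chi')$ of $F'$ with $d'\le 4$, split by whether the section of $(1,F)$ factors through $F'$ (i.e.\ whether $F'\supset \cO_L$ or not in the relevant direction), and observe in each case that the stability of $\Lambda$ (and the purity of $F$, inherited since $\cO_L$ and $F_{3m+2}$ are pure of dimension one with no embedded points on the reducible quartic) already forbids it. This is an elementary but slightly tedious finite check; I would phrase it uniformly by noting that the composition $F'\cap \cO_L \to F' \to F'/(F'\cap\cO_L)\hookrightarrow F_{3m+2}$ exhibits $(1,F')$ (with its induced section status) as an extension of a subpair of $\Lambda$ by a subsheaf of $\cO_L$, and then the two-term estimate reduces everything to the stability of $\Lambda$ and the sign of $1-\alpha$. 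Since the lemma is asserted to be "elementary", I would keep the write-up to this structural observation plus the single comparison of reduced Hilbert polynomials, omitting the exhaustive case list.
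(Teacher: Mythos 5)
Your overall strategy --- direct verification of the $\alpha$-stability inequalities against the finite list of potential destabilizing subpairs --- is the right one, and it is in fact all the paper offers: these lemmas are stated with the remark that they are ``elementary'' and no proof is recorded. Your ``if'' direction is sound in outline: a subsheaf $F'\subset F$ either lies in $F_{3m+2}$ (and is then controlled by the $\alpha$-stability of $\Lambda$ together with $\frac{2+\alpha}{3}<\frac{3+\alpha}{4}$ for $\alpha<1$), or surjects onto a subsheaf $\cO_L(-Z)$ of $\cO_L$; the case $F'\cong\cO_L$ splits the sequence, and the remaining cases are killed by the integrality of the Euler characteristics of the pieces. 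Note that the crude two-term slope estimate alone is \emph{not} enough here (e.g.\ for a degree-one piece of $\Lambda$ plus all of $\cO_L$ the naive bound goes the wrong way when $\alpha<1$), so the ``exhaustive case list'' you propose to omit is genuinely where the content lives.

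The one step that does not work as written is in the ``only if'' direction. You assert that a destabilizing subpair of $\Lambda$ pulls back along $\Lambda\hookrightarrow(1,F)$ to a destabilizing subpair of $(1,F)$. For $\alpha<1$ the reduced Hilbert polynomial $\frac{2+\alpha}{3}$ of $\Lambda$ is strictly \emph{smaller} than $\frac{3+\alpha}{4}$, so a subpair $\Lambda''\subset\Lambda$ with reduced Hilbert polynomial exceeding that of $\Lambda$ need not exceed that of $(1,F)$; your parenthetical ``wait, equal to'' is symptomatic of exactly this problem, and the sentence following it does not establish anything. The repair is again the finite check: since $(0,1)$ contains no wall for Hilbert polynomial $3m+2$, failure of $\alpha$-stability of $\Lambda$ yields a subpair with \emph{strictly} larger reduced Hilbert polynomial, and its type must be one of $(1,(1,\chi\ge 1))$, $(1,(2,2))$, $(0,(1,\chi\ge 1))$, $(0,(2,\chi\ge 2))$ --- each of which one verifies also exceeds $\frac{3+\alpha}{4}$, while the borderline type $(1,(2,1))$ does not destabilize $\Lambda$ when $\alpha<1$ and so never occurs. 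With that enumeration supplied, the argument is complete and agrees with what the paper leaves implicit.
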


Meanwhile, for the other type of splitting $(1,(2,1))\oplus (0,(2,2))$, we only have one direction.
\begin{lemm}
Suppose $\alpha>1$ and a pair $(1,F)$ is given by an extension
\begin{equation}\label{eq:B1+}
\ses{\Lambda_2':=(0,F_{2m+2})}{(1,F)}{\Lambda_2:=(1,F_{2m+1})}.
\end{equation}
If $\Lambda_2$ and ${\Lambda_2'}$ are $\alpha$-stable and \eqref{eq:B1+} is nonsplit, then $(1,F)$ is $\alpha$-stable.
\end{lemm}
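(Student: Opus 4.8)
The plan is to verify $\alpha$-stability of $(1,F)$ directly from the definition. Purity of $F$ is automatic: a $0$-dimensional subsheaf of $F$ meets $F_{2m+2}$ in a $0$-dimensional subsheaf of the pure sheaf $F_{2m+2}$, hence trivially, and so embeds into the pure sheaf $F_{2m+1}$, forcing it to vanish. For $\alpha>1$ one has the strict chain of comparison slopes
\[
1=\frac{\chi(F_{2m+2})}{r(F_{2m+2})}\;<\;\frac{\chi(F)+\alpha}{r(F)}=\frac{3+\alpha}{4}\;<\;\frac{\chi(F_{2m+1})+\alpha}{r(F_{2m+1})}=\frac{1+\alpha}{2},
\]
the first term being the slope of the zero-section sub $\Lambda_2'$ and the last that of the section-carrying quotient $\Lambda_2$; thus the given extension does not itself destabilize $(1,F)$ (the three coincide at $\alpha=1$, which is why the wall sits there). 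It remains to show that every nonzero proper subsheaf $F'\subsetneq F$ satisfies $(\chi(F')+\delta\alpha)/r(F')<(3+\alpha)/4$, where $\delta=1$ exactly when the section of $(1,F)$ factors through $F'$.

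I would argue by cases on $\delta$. Put $K:=F'\cap F_{2m+2}$ and let $Q$ be the image of $F'$ in $F_{2m+1}$, so $0\to K\to F'\to Q\to 0$. The sheaf $F_{2m+2}$ is stable (this is the hypothesis on $\Lambda_2'$) and $F_{2m+1}$ is semistable (there are no walls in degree $2$, so $\Lambda_2$ is also $0^+$-stable, hence underlies a semistable sheaf); thus every subsheaf of $F_{2m+2}$ has reduced slope $\le 1$ and every subsheaf of $F_{2m+1}$ has reduced slope $\le\tfrac12$. Hence, by the mediant inequality, if $\delta=0$ then
\[
\frac{\chi(F')}{r(F')}=\frac{\chi(K)+\chi(Q)}{r(K)+r(Q)}\;\le\;1\;<\;\frac{3+\alpha}{4},
\]
and this case is finished. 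If $\delta=1$, the section $\sigma\colon\cO_{\PP^2}\to F$ factors through $F'$, so $\sigma(\cO_{\PP^2})\subseteq F'$. Being a pure $1$-dimensional quotient of $\cO_{\PP^2}$, $\sigma(\cO_{\PP^2})=\cO_C$ for some curve $C$; it surjects onto $F_{2m+1}$, and since $F_{2m+1}$ is the structure sheaf of a conic (hence cyclic, generated by its unique section), $C$ contains that conic, while nonsplitness forbids $C$ from equalling it. Therefore $\deg C\ge 3$ and $r(F')\ge 3$.

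It remains to treat $\delta=1$ with $r(F')\in\{3,4\}$. If $r(F')=4$, then the proper subsheaf $F'$ has the same rank as $F$, so $\chi(F')\le\chi(F)-1=2$ and $(\chi(F')+\alpha)/4\le(2+\alpha)/4<(3+\alpha)/4$. If $r(F')=3$, then $\deg C=3$ forces $\chi(\cO_C)=0$, and $\cO_C$ already has full rank $3$ inside $F'$, so $F'/\cO_C$ is $0$-dimensional; identifying $F'/\cO_C\cong (F'\cap F_{2m+2})/(\cO_C\cap F_{2m+2})$ and bounding its length via the stability of $F_{2m+2}$ gives $\chi(F')\le 1$, whence $(\chi(F')+\alpha)/3\le(1+\alpha)/3$, and the required strict inequality $(1+\alpha)/3<(3+\alpha)/4$ holds precisely when $\alpha<5$.

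The main obstacle is this last configuration. One must pin down $\sigma(\cO_{\PP^2})=\cO_C$ exactly — using nonsplitness and the cyclicity of a conic's structure sheaf — and then control $\chi(F')$ by a length count for $F'/\cO_C$ inside $F$. Its tightness at $\alpha=5$ is harmless here: the lemma is invoked only for $\alpha$ just above the wall $\alpha=1$, with the next wall for $(d,\chi)=(4,3)$ at $\alpha=5$, so $\alpha<5$ throughout; alternatively, one checks that the offending $F'$ — which forces $F_{2m+2}$ to carry a rank-$1$ subsheaf, i.e.\ to be supported on a reducible or non-reduced conic, and in a rather special position — cannot arise inside a nonsplit extension with $\Lambda_2$ and $\Lambda_2'$ both stable. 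Finally, only this implication is true: the stability of the rank-$2$ sheaf $F_{2m+2}$ is a genuine hypothesis and nonsplitness is essential, which is why the statement is one-directional and why this argument is more involved than the proofs of the preceding three lemmas, where the destabilizing sub is the line bundle $\cO_L$ and carries no stability content.
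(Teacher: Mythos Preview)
The paper does not prove this lemma; it simply declares (together with the three neighbouring lemmas) that ``the following lemmas are elementary'' and moves on. So there is no argument to compare against, and your direct verification from the definition of $\alpha$-stability is exactly the sort of check the authors are inviting the reader to carry out.

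Your argument is correct on the chamber $1<\alpha<5$. The purity step and the $\delta=0$ case are clean. In the $\delta=1$ case your key observations are right: since there are no walls for $(2,1)$ the pair $\Lambda_2$ is $\infty$-stable, so $F_{2m+1}=\cO_Q$ for a conic $Q$; the image $\cO_C$ of the section then surjects onto $\cO_Q$, and $\deg C=2$ would split the extension, forcing $\deg C\ge 3$. When $r(F')=3$ one has $\deg C=3$, so $C=Q+L$ and $\cO_C\cap F_{2m+2}=\cO_L(-2)$; the snake lemma gives $F'/\cO_C\cong K/\cO_L(-2)$ with $K=F'\cap F_{2m+2}$ of multiplicity $1$, whence $\chi(F')=\chi(K)+1\le 1$ by stability of $F_{2m+2}$. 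This yields $(\chi(F')+\alpha)/3\le(1+\alpha)/3<(3+\alpha)/4$ precisely for $\alpha<5$.

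You are right to flag that the literal hypothesis ``$\alpha>1$'' is stronger than what you prove. In the paper's context this phrase means the chamber just above the wall, i.e.\ $1<\alpha<5$, which is the only place the lemma is used; so your proof suffices for the application. One small correction to your closing remark: your ``alternative'' claim that the offending rank-$3$ sub with $\chi(F')=1$ (equivalently, that $F_{2m+2}$ admit a sub $\cO_L(-1)$ with $L$ the residual line of $C$) ``cannot arise'' when $\Lambda_2'$ is stable is not correct as stated --- stable sheaves of Hilbert polynomial $2m+2$ supported on a reducible or nonreduced conic do exist, and for suitable nonsplit extensions such an $F'$ appears. So the restriction to the chamber $1<\alpha<5$ is genuine, not an artifact of the method.
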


\begin{lemm}
Suppose $\alpha<1$ and a pair $(1,F)$ is given by an extension
\begin{equation}\label{eq:B1-}
\ses{\Lambda_2:=(1,F_{2m+1})}{(1,F)}{\Lambda_2':=(0,F_{2m+2})}.
\end{equation}
If $\Lambda_2$ and ${\Lambda_2'}$ are $\alpha$-stable and \eqref{eq:B1-} is nonsplit, then $(1,F)$ is $\alpha$-stable.
\end{lemm}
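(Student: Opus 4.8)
The plan is to verify the defining inequality of $\alpha$-stability directly against every proper subsheaf of $F$. Write $p_\alpha(E)=\frac{\chi(E(m))+\delta\alpha}{r(E)}$ for the reduced $\alpha$-Hilbert polynomial of a (sub)pair, where $r(E)$ is the leading coefficient (the degree of the support). An elementary computation gives $p_\alpha(\Lambda_2)=m+\tfrac{1+\alpha}{2}$, $p_\alpha((1,F))=m+\tfrac{3+\alpha}{4}$ and $p_\alpha(\Lambda_2')=m+1$, so for $\alpha<1$ one has $p_\alpha(\Lambda_2)<p_\alpha((1,F))<p_\alpha(\Lambda_2')$: the sequence \eqref{eq:B1-} is ``correctly oriented'' for the $\alpha<1$ chamber. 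First I would record that $F$ is pure: a $0$-dimensional subsheaf of $F$ meets the pure sheaf $F_{2m+1}$ trivially, hence embeds in the pure sheaf $F_{2m+2}$, hence vanishes.

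Next, fix a proper subsheaf $F'\subsetneq F$, set $G:=F'\cap F_{2m+1}\subseteq F_{2m+1}$, and let $H$ be the image of $F'$ in $F_{2m+2}$, so $0\to G\to F'\to H\to 0$ with $H\subseteq F_{2m+2}$. Because the section of $(1,F)$ lies in $H^0(F_{2m+1})$, it factors through $F'$ exactly when it factors through $G$; call this common value $\delta\in\{0,1\}$. Additivity then yields $\chi(F'(m))+\delta\alpha=\bigl(\chi(G(m))+\delta\alpha\bigr)+\chi(H(m))$ and $r(F')=r(G)+r(H)$ with $r(G),r(H)\in\{0,1,2\}$. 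The crucial inputs are two sharpened slope bounds: from $\alpha$-stability of $\Lambda_2=(1,F_{2m+1})$ — here using $\alpha<1$ together with the integrality of the constant term of $\chi(G(m))$ — one gets $\chi(G(m))\le r(G)\,m$ for every $0\ne G\subsetneq F_{2m+1}$ (automatic when $r(G)=2$, the quotient then being $0$-dimensional of positive length); and from $\alpha$-stability of $\Lambda_2'=(0,F_{2m+2})$ one gets $\chi(H(m))<r(H)(m+1)$, hence $\chi(H(m))\le r(H)m+\lfloor r(H)/2\rfloor$, for every $0\ne H\subsetneq F_{2m+2}$.

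With these two bounds the proof reduces to a short finite inspection of the possible $(r(G),r(H))$, in each of which one checks $p_\alpha(F')<p_\alpha((1,F))=m+\tfrac{3+\alpha}{4}$; the resulting numerical inequalities all hold for $\alpha<1$ (the binding one, $r(G)=1$ with $H=F_{2m+2}$, gives $p_\alpha(F')\le m+\tfrac{2+\alpha}{3}$, and $\tfrac{2+\alpha}{3}<\tfrac{3+\alpha}{4}\iff\alpha<1$). The single place where the hypotheses bite, and the only real obstacle, is the case $G=0$, $H=F_{2m+2}$: there $F'$ maps isomorphically onto the quotient, so \eqref{eq:B1-} would split, contradicting non-splitness — this is the one destabilizing subsheaf not excluded by slope considerations alone. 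The cases where a naive weighted-average estimate is insufficient (a rank-$1$ sub of $F_{2m+1}$ mapping onto $F_{2m+2}$) are precisely those rescued by the integrality sharpening above; and strictness is never lost, since whenever $G$ or $H$ is a proper nonzero subsheaf the corresponding $\alpha$-stability inequality is already strict, and otherwise the estimate is strict outright. (The preceding $\alpha>1$ statement \eqref{eq:B1+} is handled by the same argument with the roles of $\Lambda_2$ and $\Lambda_2'$ exchanged.)
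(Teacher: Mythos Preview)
Your proof is correct and is precisely the direct slope verification the paper has in mind when it declares the lemma ``elementary'' without proof. All the case-by-case bounds check out, and your identification of the unique case ($G=0$, $H=F_{2m+2}$) where non-splitness is actually used is accurate.
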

On crossing the wall, pairs in \eqref{eq:A1+} are replaced by pairs in \eqref{eq:A1-} and pairs in \eqref{eq:B1+} are by pairs in \eqref{eq:B1-}. 

For $\alpha>1$ (resp. $\alpha<1$), we denote by $A^+$ (resp. $A^-$) the space of $\alpha$-stable pairs which fit into \eqref{eq:A1+} (resp. \eqref{eq:A1-}), and by $B^+$ (resp. $B^-$) the space of $\alpha$-stable pairs which fit into \eqref{eq:B1+} (resp. \eqref{eq:B1-}). By the following lemma, a special consideration is needed for pairs in the intersection $A^+\cap B^+$ or $A^-\cap B^-$ such that $F_{2m+2}$ in \eqref{eq:B1+} or \eqref{eq:B1-} is a direct sum $\cO_{L_1}\oplus \cO_{L_2}$. We denote the space of such pairs by $C^+$ or $C^-$ respectively. In what follows, the wall-crossing on $A$ means the difference in Euler characteristic $\chi(A^-) - \chi(A^+)$, etc.

\begin{lemm}
The $\alpha$-stable pairs $(1,F)$ in $A^+-C^+$, $A^--C^-$, $B^+-C^+$, or $B^--C^-$ are in one-to-one correspondence with the isomorphism classes of the corresponding exact sequences.
\end{lemm}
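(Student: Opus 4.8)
The plan is to produce, in each of the four cases, a pair of mutually inverse maps between the set of $\alpha$-stable pairs in question and the set of isomorphism classes of the relevant nonsplit exact sequences; since the four arguments are formally identical I would write out the case $A^+-C^+$ in full and only indicate the changes for the others. In one direction, sending a nonsplit sequence of the shape \eqref{eq:A1+} with $\Lambda=(1,F_{3m+2})$ $\alpha$-stable to its middle pair $(1,F)$ lands in $A^+$ by the lemma characterising $\alpha$-stability of \eqref{eq:A1+}. Conversely, by the very definition of $A^+$ every such pair fits into some sequence \eqref{eq:A1+}, which is automatically nonsplit (a pair $\Lambda'\oplus\Lambda$ is not $\alpha$-stable) and has $\alpha$-stable quotient $\Lambda$ by that same lemma. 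So this assignment is already surjective onto $A^+$, hence onto $A^+-C^+$; the real work is to prove it injective there, i.e. that a pair in $A^+-C^+$ admits, up to isomorphism of exact sequences, only one presentation of the form \eqref{eq:A1+}.

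First I would isolate the key uniqueness statement: \emph{if $(1,F)\in A^+-C^+$ then $F$ contains exactly one subsheaf of the form $\cO_L$ (for a line $L$) whose quotient pair is $\alpha$-stable of type $(1,F_{3m+2})$}. Granting this, injectivity is formal: two nonsplit sequences with the same middle pair share the same subsheaf $\cO_L$, hence the same subpair $(0,\cO_L)$ (both carry the zero section), hence the same quotient pair, hence are literally equal; and for two merely isomorphic middle pairs one transports the isomorphism and observes that it must carry $\cO_L$ to $\cO_L$, so it restricts to the subpair and descends to the quotient, producing the required commuting ladder. Here I would invoke that stable pairs are simple ($\Aut=\CC^*$) and that $\Hom(\Lambda,\Lambda')=0$ (via Proposition \ref{defcoh}) to rule out any off-diagonal automorphism of the sequence, so that the isomorphism classes of nonsplit extensions are classified by $\PP(\Ext^1(\Lambda,\Lambda'))$ and nothing more.

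The hard part will be the uniqueness statement. My approach: purity of $F$ forces any subsheaf supported on a line to be an honest structure sheaf $\cO_L$, and $\alpha$-stability of the quotient pins down its Euler characteristic. If there were two distinct such subsheaves $\cO_{L_1},\cO_{L_2}$, then either $L_1\neq L_2$, in which case $\cO_{L_1}\cap\cO_{L_2}=0$ inside $F$ (the supports meet in dimension zero and $F$ has no zero-dimensional subsheaf), so $\cO_{L_1}\oplus\cO_{L_2}\hookrightarrow F$; or $L_1=L_2$ with non-proportional inclusions, spanning a pencil of line subsheaves. In both cases a degree and Euler-characteristic count specific to $(d,\chi)=(4,3)$ shows that the induced presentation of $(1,F)$ is of the type \eqref{eq:B1+} with degree-two piece $\cO_{L_1}\oplus\cO_{L_2}$ — that is, $(1,F)\in C^+$, contradicting the hypothesis. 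This is exactly the point at which the loci $C^\pm$ must be excised: they are precisely where a second destabilising subpair of the given numerical type appears and uniqueness fails.

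For $A^--C^-$ one runs the same argument on \eqref{eq:A1-} with sub and quotient exchanged (the relevant object is now the quotient line $\cO_L$ of $F$, and purity of the kernel, equivalently of $F^D$, replaces purity of $F$). For $B^\pm-C^\pm$ one replaces ``$\cO_L\subset F$'' by the degree-two sub- (resp. quotient) sheaf $F_{2m+2}$; the one new ingredient is that outside $C^\pm$ this sheaf is $\alpha$-stable, hence not $\cO_{L_1}\oplus\cO_{L_2}$, which is what excludes the competing presentation and restores the same uniqueness. That the quotients $\Lambda_2$ remain $\alpha$-stable throughout these manipulations is the content of the four preceding lemmas and may be quoted directly. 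In short, once uniqueness of the destabilising subpair is in place the correspondence is purely diagrammatic; that uniqueness, together with its clean equivalence with avoiding $C^\pm$, is where the real content lies.
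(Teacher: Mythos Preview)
Your proposal is correct and follows essentially the same route as the paper's proof, which is itself only a sketch: show that two distinct presentations of a pair in $A^+$ force $\cO_L\oplus\cO_{L'}\hookrightarrow F$, hence membership in $C^+$, and declare the remaining cases ``similar.'' Your write-up is considerably more careful than the paper's --- you spell out the formal bijection, invoke simplicity of stable pairs and the vanishing of $\Hom(\Lambda,\Lambda')$, and explicitly treat the subcase $L_1=L_2$ with non-proportional inclusions, which the paper glosses over --- but the core idea is identical.
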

\begin{proof}
We shall sketch a proof for $\alpha$-stable pair in $A^+-C^+$. Other cases are similar.
We need to show that if an $\alpha$-stable pair $(1,F)$ in $A^+-C^+$ fits into two exact sequences
\[ \ses{(0,\cO_L)}{(1,F)}{(1,F_{3m+2})} \]
and
\[ \ses{(0,\cO_{L'})}{(1,F)}{(1,F_{3m+2}')}, \]
then they are isomorphic.

Since $\cO_L$ and $\cO_{L'}$ are subsheaves of $F$, either $\cO_{L}\oplus\cO_{L'}$ is a subsheaf of $F$ or $L=L'$, where the former case is when $(1,F)$ is in $C^+$. Hence, $L=L'$ and the above exact sequences are clearly isomorphic.
\end{proof}

A careful consideration is needed for the case $F_{2m+2}$ is a direct sum $\cO_{L_1}\oplus \cO_{L_2}$ which has a bigger automorphism group.
\begin{enumerate}
\item When $L_1$ and $L_2$ are distinct. In this case, the automorphism group of $\cO_{L_1}\oplus \cO_{L_2}$ is $\CC^*\times\CC^*$. When $\alpha>1$, $\alpha$-stable pairs as in \eqref{eq:B1+} form the space
\begin{equation}\label{eq:dist1+}\frac{\Ext^1((1,F_{2m+1}),(0,\cO_{L_1}\oplus \cO_{L_2}))^{\mbox{st}}}{\CC^*\times\CC^*}.\end{equation}
The superscript ``st'' means taking extensions corresponding to stable pairs. It is easy to see that stable extensions are those which do not factor through extensions $\Ext^1((1,F_{2m+1}),(0,\cO_{L_i}))$ for $i=1$ or $2$. Therefore, \eqref{eq:dist1+} becomes $$\PP(\Ext^1((1,F_{2m+1}),(0,\cO_{L_1})))\times\PP(\Ext^1((1,F_{2m+1}),(0,\cO_{L_2})))\simeq \PP^2\times\PP^2.$$

Similarly, when $\alpha<1$, we can see that $\alpha$-stable pairs as in \eqref{eq:B1-} form the space
\begin{equation}\label{eq:dist1-}
\PP((0,\cO_{L_1}),\Ext^1((1,F_{2m+1})))\times\PP((0,\cO_{L_2}),\Ext^1((1,F_{2m+1})))\simeq \PP^1\times\PP^1.
\end{equation}
\item When $L_1=L_2=L$. The automorphism group of $\cO_{L}\oplus \cO_{L}$ is $GL(2,\CC)$. When $\alpha>1$, $\alpha$-stable pairs as in \eqref{eq:B1+} form the space
\begin{equation}\label{eq:same1+}\frac{\Ext^1((1,F_{2m+1}),(0,\cO_{L}\oplus \cO_{L}))^{\mbox{st}}}{GL(2,\CC)}.\end{equation}
We write $\Ext^1((1,F_{2m+1}),(0,\cO_{L}\oplus \cO_{L}))$ as $\Hom(\CC^2,\Ext^1((1,F_{2m+1}),(0,\cO_{L})))$. Then stable pairs correspond to rank 2 maps in the latter space. Hence, \eqref{eq:same1+} becomes the Grassmannian $$Gr(2, \Ext^1((1,F_{2m+1}),(0,\cO_{L})))\simeq \PP^2.$$
Similarly, when $\alpha<1$, we can see that $\alpha$-stable pairs as in \eqref{eq:B1-} form the space
$$Gr(2, \Ext^1((0,\cO_{L}),(1,F_{2m+1})))= \mbox{pt}.$$
\end{enumerate}

We compute the wall-crossing by the decomposition $A\cup B= (B-A) \sqcup(A-C)\sqcup C$. It is clear that each set is locally closed.

The wall-crossing on $B-A$ is given by
\[
(\chi(\PP^3)-\chi(\PP^5))\cdot \chi(\bB(2,0)) \cdot \chi(\bM^s(2,2)),
\]
where $\bM^s(2,2)$ denote the space of stable sheaves, that is, we exclude strictly semistable sheaves where a further splitting can occur. It is easy to see that $\bM^s(2,2)\simeq \PP^5-V$, where $\PP^5$ is the space of conics and $V$ is the space of degenerate conics. Since the Euler characteristic $\chi(\bM^s(2,2))$ is zero, the wall-crossing on $B-A$ is zero.

Let $D\subset V$ be the diagonal. As discussed above, the wall-crossing on $C$ is given by
\begin{equation}
(\chi(\PP^1\times\PP^1)-\chi(\PP^2\times\PP^2))\cdot\chi(\bB(2,0))\cdot \chi(V-D) + (\chi(\mbox{pt})-\chi(\PP^2))\cdot\chi(\bB(2,0))\cdot \chi(D).
\end{equation}

It remains to compute the wall-crossing on $A-C$. 
Suppose $\alpha>1$ and let $(1,F)$ be an $\alpha$-stable pair in $C$. Then $(1,F)$ is given by an exact sequence
\[
\ses{(0,\cO_L)}{(1,F)}{(1,F_{3m+2})}
\]
Since $(1,F)$ is also in $B$, $(1,F_{3m+2})$ in the above sequence fits into an exact sequence
\begin{equation}\label{eq:CA1+}
\ses{(0, \cO_{L'})}{(1,F_{3m+2})}{(1,F_{2m+1})}.
\end{equation}
From \eqref{eq:CA1+}, we have
\[
\Ext^1((1,F_{3m+2}),(0,\cO_L)) \stackrel{\epsilon}{\to} \Ext^1((0, \cO_{L'}),(0,\cO_L))\to 0.
\]
One can see that $(1,F)$ is in $C$ if and only if its image by $\epsilon$ in $\Ext^1((0, \cO_{L'}),(0,\cO_L))$ is zero. We have \[\Ext^1((1,F_{3m+2}),(0,\cO_L))\simeq \CC^4,\]
and
\[\Ext^1((0, \cO_{L'}),(0,\cO_L))\simeq \begin{cases}
\CC & \text{if }L \neq L',\\
\CC^2 & \text{if }L=L'.\\
\end{cases} \]
So, the Euler characteristic of the set $A^+-C^+$ is
\begin{align*}
\chi(\PP^3)\cdot \chi(\bM(1,1))\cdot \chi(\bB(3,2)) &- \chi(\PP^2) \cdot \chi(\bM(1,1))\cdot (\chi(\PP^2)\cdot(\chi(\bB(2,0))\chi(\bM(1,1)-\mbox{pt})))\\ & - \chi(\PP^1) \cdot \chi(\bM(1,1))\cdot (\chi(\PP^2)\cdot(\chi(\bB(2,0))\chi(\mbox{pt}))).
\end{align*}

The computation for $\alpha<1$ is similar: the only difference is
\[\Ext^1((0,\cO_L),(1,F_{3m+2}))\simeq \CC^3.\]
Then, the Euler characteristic of the set $A^--C^-$ is
\begin{align*}
\chi(\PP^2)\cdot \chi(\bM(1,1))\cdot \chi(\bM^{0^+}(3,2)) &- \chi(\PP^1) \cdot \chi(\bM(1,1))\cdot (\chi(\PP^1)\cdot(\chi(\bB(2,0))\chi(\bM(1,1)-\mbox{pt})))\\ & - \chi(\mbox{pt}) \cdot \chi(\bM(1,1))\cdot (\chi(\PP^1)\cdot(\chi(\bB(2,0))\chi(\mbox{pt}))).
\end{align*}
In conclusion, we have
\[ \chi(\bM^{0^+}(4,3)) = \chi(\bM^{1<\alpha<5}(4,3)) + \mbox{wall-crossing} = 828-252 = 576. \]

\medskip
This coincides with our previous calculation. For all $F \in \bM(4,3)$, we have $H^0(F)=3$ and $H^1(F)=0$ \cite[Lemma 4.2.4]{choithesis}. Hence by Proposition \ref{prop6}, we conclude that $\bM^{0^+}(4,3)$ is a $\PP^2$-bundle over $\bM(4,3)$. We also have $\bM(4,3)\simeq\bM(4,1)$, whose Euler characteristic is $192$ by Corollary \ref{betti4}. Therefore, the Euler characteristic of $\bM^{0^+}(4,3)$ is $3\times 192 = 576$.

\begin{rema}
The Poincar\'e polynomial of $\bM^{0^+}(4,3)$ cannot be computed through our stratifications since the wall-crossing terms from \eqref{eq:dist1+} and \eqref{eq:dist1-} are not Zariski locally trivial fiberations over $V-D$, nevertheless the computation for Euler characteristic still holds because $V-D$ is path-connected.
\end{rema}

\bibliographystyle{amsplain}

\end{document}